\documentclass[fontsize=11pt]{amsart}

\usepackage{amsmath, amsthm, amssymb, mathtools, braket, commath, xfrac, gensymb, mathrsfs, physics, centernot}
\usepackage{verbatim}
\usepackage{stmaryrd}
\usepackage[margin=1in]{geometry}
\usepackage{scrextend}
\usepackage{tikz-cd}
\usepackage{enumitem}
\usepackage{titlesec}
\usepackage[pdfencoding=auto]{hyperref}
\hypersetup{
  colorlinks= true,
  linkcolor=[RGB]{0,89,42},
  urlcolor=[RGB]{0,89,42},
  citecolor=[RGB]{0,89,42}
}
\usetikzlibrary{calc, intersections}

\usepackage{colonequals}
\usepackage{bm}

\titlelabel{\thetitle.\hspace{.2cm}}

\linespread{1.1}

\setlength{\parindent}{.3 in}

\newcommand{\Z}{\mathbb{Z}}

\newcommand{\Q}{\mathbb{Q}}

\newcommand{\cE}{\mathcal{E}}

\newcommand{\cA}{\mathcal{A}}
\newcommand{\M}{\mathcal{M}}

\newcommand{\F}{\mathcal{F}}
\newcommand{\B}{\mathcal{B}}
\newcommand{\cC}{\mathcal{C}}
\newcommand{\cD}{\mathcal{D}}
\newcommand{\K}{\mathcal{K}}

\newcommand{\fX}{\mathfrak{X}}

\newcommand{\w}{\omega}

\renewcommand{\O}{\mathcal{O}}

\renewcommand{\L}{\mathbb{L}}
\renewcommand{\P}{\mathbb{P}}

\newcommand{\vir}{\mathrm{vir}}
\newcommand{\BL}{\mathrm{BL}}

\newcommand{\cS}{\mathcal{S}}

\DeclareMathOperator{\Aut}{Aut}

\DeclareMathOperator{\Sym}{Sym}

\DeclareMathOperator{\Pic}{Pic}

\DeclareMathOperator{\Jac}{Jac}

\DeclareMathOperator{\Bl}{Bl}

\DeclareMathOperator{\genus}{genus}
\DeclareMathOperator{\Def}{Def}

\DeclareMathOperator{\gon}{gon}


\allowdisplaybreaks

\theoremstyle{plain} \newtheorem{thm}{Theorem}[section]
\newtheorem{lem}[thm]{Lemma}
\newtheorem{prop}[thm]{Proposition}
\newtheorem{cor}[thm]{Corollary}

\theoremstyle{definition}

\newtheorem{remark}[thm]{Remark}

\newtheorem{definition}[thm]{Definition}
\newtheorem{set-up}[thm]{Set-Up}
\newtheorem{problem}[thm]{Problem}

\newtheorem*{acknowledgements*}{Acknowledgements}

\newcounter{tmp}

\begin{document}

\title{Nodal Elliptic Curves on K3 Surfaces}

\author{Nathan Chen, Fran\c{c}ois Greer and Ruijie Yang}

\maketitle

\thispagestyle{empty}

\begin{abstract}

Let $(X,L)$ be a general primitively polarized K3 surface with $c_1(L)^2 = 2g-2$ for some integer $g \geq 2$. The Severi variety $V^{L,\delta} \subset \abs{L}$ is defined to be the locus of reduced and irreducible curves in $\abs{L}$ with exactly $\delta$ nodes and no other singularities. When $\delta=g$, any curve $C \in V^{L,g}$ is a rational curve; in fact, Chen \cite{Chen02} has shown that all rational curves in $\abs{L}$ are nodal, and the number of such rational curves is given by the Yau-Zaslow formula \cite{YZ96}.

In this paper, we consider the next case where $\delta = g-1$ and the Severi variety $V^{L,g-1}$ parametrizing nodal elliptic curves is of dimension 1. Let $\overline{V}^{L,g-1} \subset \abs{L}$ denote the Zariski closure. For a reduced curve $C$, we define the \textit{geometric genus} of $C$ to be the sum of the genera of the irreducible components of the normalization. We prove that the geometric genus of the closure $\overline{V}^{L,g-1} \subset \abs{L}$ is bounded from below by $O(e^{C\sqrt{g}})$.

\end{abstract}

\section*{Introduction}

Let $(X,L)$ be a general primitively polarized K3 surface with $c_1(L)^2 = 2g-2$ for some integer $g \geq 2$.  The linear system $\abs{L} \cong \P^{g}$ is the base of a family of curves $C$ on $X$ with arithmetic genus $g$. For $0\leq \delta\leq g$, we define the \textit{Severi variety} $V^{L,\delta}\subset \abs{L}$ to be the locus of (irreducible) curves with $\delta$ nodes, which is codimension $\delta$ in $\abs{L}$. 

When $\delta=g$, any curve $C \in V^{L,g}$ is a rational curve; in fact, Chen \cite{Chen02} has shown that all rational curves in $\abs{L}$ are nodal. The number $N_g$ of such rational curves\footnote{Strictly speaking, $N_0=1$ (resp. $N_1=24$) are interpreted as counts of rational curves in a class of self-intersection $-2$ (resp. 0), which is not a polarization class.} is given by the Yau-Zaslow formula \cite{YZ96}
\[ \sum_{g=0}^\infty N_g q^g = \frac{q}{\Delta(q)} = 1+24q + 324 q^2 + \ldots \]
in terms of modular forms.

In this note, we focus on the next case when $\delta=g-1$. By the \textit{compactified Severi curve} we mean the closure
\[ \overline{V}^{L,g-1} \subset \abs{L}. \]
One can ask about the geometry of this curve. For example, what is the asymptotic behavior of the geometric or arithmetic genus? Is it irreducible? The best known result in this direction is that $\overline{V}^{L, \delta}$ is irreducible for general K3 surfaces when $\delta \leq g-4$ \cite{BLC21}. See also \cite{CD19} for irreducibility when $g \geq 11$ and $\delta\leq \frac{g+3}{4}$.

For a reduced curve $C$, we define the \textit{total geometric genus} of $C$ to be the sum of the genera of the components of the normalization. Our main result gives a lower bound for the total geometric genus of $\overline{V}^{L,g-1}$.

\begingroup
\setcounter{tmp}{\value{thm}}
\setcounter{thm}{0} 
\renewcommand\thethm{\Alph{thm}}
\begin{thm}\label{thm:Main}
On a very general primitively polarized K3 surface $(X, L)$ of genus $g$, the total geometric genus of the compactified Severi curve is bounded from below:
\[ \genus \left( \overline{V}^{L,g-1} \right) \geq O(e^{C\sqrt{g}}), \]
for some constant $C > 0$. In particular, it goes to $\infty$ as $g$ does.
\end{thm}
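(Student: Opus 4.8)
The plan is to exploit the structure of $\overline{V}^{L,g-1}$ near the boundary of the Severi locus, where the elliptic curves degenerate to rational ones. By Yau--Zaslow there are $N_g$ rational curves in $|L|$, and by Chen's theorem each is irreducible with exactly $g$ nodes. The first step is a local analysis at such a point $[C_0]$: smoothing one of the $g$ nodes of $C_0$ while preserving the remaining $g-1$ produces a one-parameter family of $(g-1)$-nodal elliptic curves, i.e.\ a local branch of $\overline{V}^{L,g-1}$ through $[C_0]$, and I would show that for $(X,L)$ very general each $C_0$ indeed lies in $\overline{V}^{L,g-1}$, with exactly $g$ smooth, pairwise distinct local branches there. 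This rests on the fact that the nodes of a curve on a K3 surface deform independently --- equivalently, that the natural node-smoothing map from the first-order deformations $H^0(C_0,N_{C_0/X})$ onto the product of the $T^1$'s of the $g$ nodes is surjective --- which is where very generality of $(X,L)$ enters; here I would invoke (or reprove, in the spirit of Chen, Tannenbaum and Ciliberto--Dedieu--Sernesi) the needed transversality for Severi varieties on K3 surfaces. Along each branch the normalized curve $\widetilde{C_b}$ degenerates to a $1$-nodal rational curve: normalizing $C_0$ at $g-1$ of its $g$ nodes leaves, by adjunction, an irreducible arithmetic-genus-$1$ curve with a single node. Consequently the normalization $B=\bigsqcup_i B_i\to\overline{V}^{L,g-1}$ carries, over the $N_g$ boundary points, a total of $g\cdot N_g$ distinct points, over each of which the tautological family of normalized curves acquires a singular fibre of type $I_1$.

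The second and crux step is to convert this into a lower bound for $\genus(\overline{V}^{L,g-1})=\sum_i\genus(B_i)$. The tautological family of the $\widetilde{C_b}$'s makes each $B_i$ the base of a non-isotrivial family of genus-$1$ curves (non-isotrivial since the $j$-invariant tends to $\infty$ at the $I_1$ fibres above the rational curves), but the variation of $j$-invariant alone cannot suffice --- a non-isotrivial elliptic surface over $\P^1$ may have arbitrarily many singular fibres --- so one must use the extra rigidity carried by the $g-1$ pairs of node-preimages on each curve, equivalently the embedding data in $X$. I would encode this through the generalized Jacobians: a $(g-1)$-nodal elliptic curve has generalized Jacobian an extension of an elliptic curve $E$ by $(\C^{\ast})^{g-1}$, classified by the $g-1$ points $x_i=q_i-p_i\in E$ (differences of the node-preimages), so that $\overline{V}^{L,g-1}$ maps to the $g$-dimensional space of tuples $(E;x_1,\dots,x_{g-1})$ --- a deep boundary stratum of $\overline{\mathcal{A}}_g$ of torus rank $g-1$ --- and one analyzes the resulting curve together with its degeneration at the $g\cdot N_g$ boundary points. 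The goal is an estimate of the shape $\genus(\overline{V}^{L,g-1})\geq c\,N_g$ for an absolute constant $c>0$ (possibly with $N_g$ replaced by a related coefficient of the quasi-modular series governing $\delta$-nodal counts on K3 surfaces, via the Göttsche--Yau--Zaslow formula). I expect this to be the genuine difficulty: the easy Euler-characteristic identities for the universal surface bound the genus only from above, and extracting a true lower bound requires controlling the correction terms relating $\genus(\overline{V}^{L,g-1})$ to the modular coefficient, and ruling out that the map to the boundary stratum collapses the interesting variation.

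Granting such an estimate, the conclusion is immediate from the arithmetic of modular forms: $\sum_g N_g q^g = q/\Delta(q)=\prod_{n\geq1}(1-q^n)^{-24}$, so by the Hardy--Ramanujan circle method (or Meinardus's theorem) $N_g=\exp\!\big((4\pi+o(1))\sqrt{g}\big)$; in any case $N_g$, and likewise any of the relevant quasi-modular coefficients, grows faster than every polynomial in $g$. Hence $\genus(\overline{V}^{L,g-1})\geq c\,N_g\geq e^{C\sqrt g}$ for a suitable $C>0$ and all $g\gg0$, which is the assertion of the theorem. For the stated bound the sharp constant is not needed: already $N_g\geq p(g)\geq e^{c\sqrt g}$, with $p$ the partition function, suffices.
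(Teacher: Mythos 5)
Your proposal has a genuine gap at precisely the step you flag as ``the crux'': nothing in the argument actually converts the $g\cdot N_g$ distinguished points on the normalization of $\overline{V}^{L,g-1}$ into a lower bound on its genus. A curve of genus $0$ can carry arbitrarily many marked points, and the auxiliary structures you propose do not rescue this: non-isotriviality of the tautological elliptic fibration is compatible with a rational base (a rational elliptic surface over $\P^1$ already has $12$ singular fibres), and the map to the torus-rank-$(g-1)$ stratum of $\overline{\mathcal{A}}_g$ recording $(E;x_1,\dots,x_{g-1})$ is not shown to be finite onto a target of controlled genus, nor is there any mechanism preventing it from contracting components or landing in a rational subvariety. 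So the asserted inequality $\genus(\overline{V}^{L,g-1})\geq c\,N_g$ is not proved, and since everything else in your outline (the local branch analysis at the Yau--Zaslow points, the Hardy--Ramanujan asymptotics) is either standard or downstream of that inequality, the proof as written does not establish the theorem. It is worth noting that the first part of your plan --- the $g$ smooth branches through each $g$-nodal rational curve --- is consistent with what is known, but it produces only incidence data, not genus.

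The paper obtains the genus from a completely different source. It specializes $(X,L)$ to a hyperelliptic K3 double cover $\pi\colon X_0\to\P^1\times\P^1$ branched over a $(4,4)$-curve $B$, and bounds $\genus(\overline{V}^{L,g-1})$ below by the genus of the flat limit $\overline{\M}_1^{\lim}(X_0,L_0)$ inside $\overline{\M}_1(X_0,L_0)$. The positivity of genus comes from exhibiting components of this limit that dominate a fixed curve $\overline{\Omega}$ (of $(1,1)$-curves twice tangent to $B$), whose geometric genus is at least $9$ because it embeds in the simple Jacobian of $B$; the exponential growth comes not from $N_g$ but from the \emph{degree} of the covering $\overline{\M}_1^{\lim}(X_0,L_0)\cap\overline{\F}\to\overline{\Omega}$, computed as a Bryan--Leung-style virtual intersection number $[\overline{\M}_1(X_0,L_0)]^{\vir}\cdot\F(J)=\bigl[\prod_m(1-q^m)^{-48}\bigr]_{q^{r-1}}$, which is $O(e^{C\sqrt{g}})$ by Hardy--Ramanujan; Riemann--Hurwitz then finishes. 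If you want to pursue your route, the missing ingredient is an analogue of this ``map of large degree onto a fixed curve of genus $\geq 2$'' (or some other positivity mechanism such as a nonconstant map to a simple abelian variety), without which the count of boundary points carries no genus information.
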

\endgroup
In \cite{EL18}, it is shown that $\overline{V}^{L,\delta}$ does not contain any pencils
\[ \P^{1} \subset \abs{L} \]
for $\delta \gtrsim g - \sqrt{\frac{g}{2}}$. In the same paper \cite[Remark 2.4]{EL18}, the authors ask whether $\overline{V}^{L,\delta}$ exhibits hyperbolic properties when $\delta$ is large. Our result provides positive evidence for this question in this first case. However, it is possible that our space contains many components with small geometric genus.

We now give a sketch of the proof of Theorem~\ref{thm:Main}. First we observe that on a very general K3 surface $(X, L)$, the compactified Severi curve is birational to the main component\footnote{Here we mean union of all the components of dimension 1.} of the Kontsevich moduli space of genus 1 stable maps to $X$ (see Lemma~\ref{severikontsevich}). After specializing to a very general hyperelliptic K3 surface $(X_{0}, L_{0})$, the total geometric genus of $\overline{V}^{L,g-1}$ is bounded from below by the total geometric genus of the reduced components of the flat limit $\overline{\M}_{1}^{\lim}(X_{0}, L_{0})$. The class of this flat limit is related to the reduced Gromov-Witten virtual class $[\overline{\M}_{1}(X_{0}, L_{0})]^{\vir}$ by deformation invariance. We then identify a substack $\overline{\F} \subset \overline{\M}_{1}(X_{0}, L_{0})$ whose components dominate a curve $\overline{\Omega}$ of positive genus. Adapting the virtual counting techniques in \cite{BL00}, we compute the degree of the cover of $\overline{\F}$ over $\overline{\Omega}$ to obtain the desired lower bound for the geometric genus of the compactified Severi curve.

\begin{acknowledgements*}
We would like to thank Jim Bryan, Rob Lazarsfeld, and David Stapleton for reading earlier versions of the paper and giving valuable feedback. Furthermore, we would like to thank Huai-Liang Chang, Joe Harris, and Aleksey Zinger for helpful discussions. We thank the anonymous referee for their valuable comments and for identifying a crucial gap in the original argument.
\end{acknowledgements*}

%
%

\section{Geometry of Severi varieties}\label{sec:background severi}

Let $X$ be a complex K3 surface, i.e., a projective surface over $\mathbb{C}$ with $H^{1}(X, \O_{X}) = 0$ and $\w_{X} \cong \O_{X}$, and let $L \in \Pic(X)$ be an ample line bundle with $c_1(L)^2 = 2g - 2$ where $g\geq 2$. By Riemann-Roch, we have $h^{0}(X, L) = g + 1$ and every curve $C \in \abs{L}$ has arithmetic genus $p_{a}(C) = g$.

\begin{definition}
Let $\K_{g}$ denote the irreducible 19-dimensional moduli stack of primitively polarized K3 surfaces $(X, L)$ of genus $g \geq 2$.
\end{definition}

\subsection{Severi varieties.}

For a given integer $\delta \geq 0$, the \textit{Severi variety} $V^{L, \delta}$ is the subset of $\abs{L}$ consisting of integral curves with $\delta$ nodes and no other singularities, so their normalization has genus $g - \delta$. Recall the following facts about Severi varieties:

\begin{prop}[Example 1.3 of \cite{CS97}]\label{prop:SeveriSmooth}
If non-empty, the Severi variety $V^{L, \delta}$ is smooth and has pure dimension $g-\delta$.
\end{prop}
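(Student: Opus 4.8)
The plan is to bound, at an arbitrary point $[C]\in V^{L,\delta}$, the local dimension $\dim_{[C]}V^{L,\delta}$ from below by $p$ and the Zariski tangent space $\dim T_{[C]}V^{L,\delta}$ from above, and then to observe that on a K3 surface the latter also equals $p$: the crucial point is that adjunction turns this tangent space into $H^{0}$ of the dualizing sheaf of the normalization of $C$, which is automatically $p$-dimensional. For the lower bound, first I would introduce the incidence variety $\Sigma\subset|L|\times X^{\delta}$ of tuples $(C,z_1,\dots,z_\delta)$ with $C$ singular at each $z_i$. Locally on the smooth $(g+2\delta)$-dimensional variety $|L|\times X^{\delta}$, $\Sigma$ is cut out by the $3\delta$ equations expressing that the defining equation of $C$ vanishes to order $\geq 2$ at each $z_i$, so every irreducible component of $\Sigma$ has dimension $\geq g-\delta=p$. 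The open locus $\Sigma^{\circ}\subseteq\Sigma$ where the $z_i$ are distinct nodes and $C$ has no further singularity is nonempty (since $V^{L,\delta}\neq\emptyset$) and maps finitely and surjectively onto $V^{L,\delta}$, whence $\dim_{[C]}V^{L,\delta}\geq p$ for every $[C]$.

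Next I would compute the tangent space. Let $\nu\colon\tilde{C}\to C$ be the normalization, $Z\subset C$ the reduced $\delta$-point scheme of nodes, and $D=\sum_i(z_i^{+}+z_i^{-})$ the divisor on $\tilde{C}$ of the $2\delta$ preimages of the nodes. Since $H^{1}(X,\O_X)=0$, the sequence $0\to\O_X\to L\to N_{C/X}\to 0$ identifies $T_{[C]}|L|\cong H^{0}(C,N_{C/X})$, and adjunction on the K3 surface gives $N_{C/X}=\O_C(C)\cong\omega_C$. A tangent vector to $V^{L,\delta}$ is a first-order deformation of $C$ inside $|L|$ that keeps $\delta$ nodes; inspecting the local model $xy=\epsilon f$ at a node shows this happens exactly when the normal section $f$ vanishes at that node, so $T_{[C]}V^{L,\delta}=H^{0}(C,\omega_C\otimes\mathcal{I}_{Z/C})$. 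Now $\mathcal{I}_{Z/C}\cong\nu_{*}\O_{\tilde{C}}(-D)$ (at each node the ideal is exactly the functions vanishing on both branches) while $\omega_{\tilde{C}}=\nu^{*}\omega_C(-D)$, so the projection formula gives $\omega_C\otimes\mathcal{I}_{Z/C}\cong\nu_{*}\omega_{\tilde{C}}$ and therefore
\[ \dim T_{[C]}V^{L,\delta}=h^{0}\bigl(\tilde{C},\omega_{\tilde{C}}\bigr)=\genus(\tilde{C})=g-\delta=p. \]

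Combining the two steps, $p\leq\dim_{[C]}V^{L,\delta}\leq\dim T_{[C]}V^{L,\delta}=p$ at every $[C]\in V^{L,\delta}$, so $V^{L,\delta}$ is smooth of dimension $p$ at each of its points, hence smooth of pure dimension $p$. The step I expect to demand the most care is the tangent space identification: one must justify, via equisingular deformation theory, that first-order deformations of $C$ within $|L|$ preserving $\delta$ nodes are precisely the sections of the equisingular normal sheaf $N'_{C/X}=N_{C/X}\otimes\mathcal{I}_{Z/C}$, and that for a node this really is the twist of $N_{C/X}$ by the \emph{reduced} scheme of nodes — this is exactly where the nodality hypothesis is used (for worse singularities the equisingular ideal is more subtle). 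Granting this, the adjunction isomorphism $N_{C/X}\cong\omega_C$, special to K3 surfaces, makes the count immediate and, notably, uses no vanishing theorem beyond $H^{1}(X,\O_X)=0$.
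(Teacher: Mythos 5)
The paper does not prove this statement---it is quoted directly from \cite{CS97}---and your argument is precisely the standard one underlying that reference: the nodes cut out $V^{L,\delta}$ in $\abs{L}$ with expected codimension $\delta$, while on a K3 the equisingular normal sheaf $N_{C/X}\otimes\mathcal{I}_{Z/C}\cong\nu_{*}\omega_{\tilde{C}}$ forces the tangent space to have exactly the expected dimension $p$. The argument is correct, including the delicate identifications ($N_{C/X}\cong\omega_C$ via adjunction and triviality of $\omega_X$, and $\mathcal{I}_{Z/C}\cong\nu_*\O_{\tilde C}(-D)$ at the nodes), so there is nothing to add.
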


\begin{prop}[Proposition 4 of \cite{CD19}]
For the general $(X, L) \in \K_{g}$, the Severi variety $V^{L, \delta}$ is non-empty for all non-negative integers $\delta \leq g$.
\end{prop}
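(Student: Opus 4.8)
The plan is to follow the classical strategy of producing $\delta$-nodal curves by smoothing the nodes of a more singular one: first settle the extreme case $\delta = g$, the existence of a rational curve in $\abs{L}$, and then descend to all smaller $\delta$ one node at a time.

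For the first step I would invoke the known existence of rational curves on a general K3 surface: for general $(X,L) \in \K_g$ there is an irreducible curve $R \in \abs{L}$ of geometric genus $0$. This can be deduced from the positivity and enumerativity of the Yau--Zaslow count $N_g$ \cite{YZ96, BL00}, or directly by degenerating $(X,L)$ to an elliptic K3 surface with a section as in \cite{BL00}, exhibiting a rational curve there, and deforming back: the relevant Severi variety is smooth of the expected dimension by Example 1.3 of \cite{CS97}, so the rational curve deforms to nearby, hence to general, polarized K3 surfaces. By \cite{Chen02}, $R$ is nodal, so it has exactly $p_a(R) = g$ nodes, say at $p_1, \dots, p_g$; in particular $V^{L,g} \neq \emptyset$.

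For the descent, fix $0 \le \delta < g$ and let $W \subset \abs{L} \cong \P^g$ be the locus of curves carrying a node in a neighborhood of each of $p_1, \dots, p_\delta$. Near $[R]$ this is the intersection of $\delta$ hypersurfaces --- carrying a node near $p_i$ is a single condition, since by the non-degeneracy of the node a nearby curve has a unique critical point near $p_i$, which one requires to lie on the curve --- so $\codim_{[R]} W \le \delta$. On the other hand, $V^{L,g}$ is, near $[R]$, the intersection of all $g$ of these hypersurfaces, and by Example 1.3 of \cite{CS97} it is smooth of dimension $0$ there; hence the $g$ conditions are independent at $[R]$, so the $\delta$ of them defining $W$ are too, and $W$ is smooth of dimension $g - \delta$ near $[R]$. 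Let $W_0$ be the component of $W$ through $[R]$. Its general member is irreducible (the non-integral curves form a proper closed subset of $\abs{L}$ avoiding $[R]$), and, since the remaining $g - \delta$ node conditions are likewise independent and having an extra singularity is a closed condition not satisfied by $R$, its only singularities are $\delta$ nodes, near $p_1, \dots, p_\delta$. Hence $V^{L,\delta} \neq \emptyset$ for every $\delta \le g$.

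The one genuinely deep ingredient is the first: the existence of a rational curve in $\abs{L}$ on a general polarized K3 surface of arbitrary genus. The descent through the intermediate values of $\delta$ is soft deformation theory, whose only technical content --- the independence of the node-preserving conditions at $[R]$ --- is already contained in the smoothness of Severi varieties recorded in Example 1.3 of \cite{CS97}.
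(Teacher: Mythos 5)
The paper does not prove this statement; it is quoted from \cite{CD19} without proof. Your argument is the standard one underlying that result --- produce an irreducible rational curve in $\abs{L}$ (Bogomolov--Mumford/Mori--Mukai, or positivity of the Yau--Zaslow count), invoke \cite{Chen02} for nodality, and then partially smooth, using that smoothness of $V^{L,g}$ at the rigid rational curve is exactly the statement that the $g$ nodes impose independent conditions on $\abs{L}$, so any $\delta$ of the local equisingular hypersurfaces meet transversally and their general intersection point is a $\delta$-nodal irreducible curve --- and it is correct. Two small points of hygiene: the route through ``enumerativity of $N_g$'' risks circularity (enumerativity is usually established using existence and nodality), so the degeneration argument is the one to lean on; and the phrase ``having an extra singularity is a closed condition not satisfied by $R$'' is misstated, since $R$ does have the extra nodes --- what you need, and what transversality gives, is that the locus of curves in $W_0$ singular near $p_j$ for $j>\delta$ is a \emph{proper} closed subset (a hypersurface through $[R]$), so the general member of $W_0$ avoids it.
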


In this paper, we focus on the case where $\delta = g - 1$. These results imply that for general $(X, L) \in \K_{g}$, $V^{L, \delta}$ is a smooth curve.

\subsection{Severi varieties via specialization.} 

We will study the Severi variety of a general K3 surface by specializing to a hyperelliptic K3 surface. The case $g=2$ is special because the general such K3 surface $(X,L)$ is already hyperelliptic. In this case, the Severi curve $\overline{V}^{L,1}$ is the dual curve of the sextic branch curve in $\P^2$, so it has geometric genus 10. From now on, let us assume that $g \geq 3$.

If $(X,L)$ is a K3 surface satisfying $c_1(L)^2 = 2g-2 \geq 4$, and $L \in \Pic(X)$ has no fixed components, then:

\begin{thm}[{see \cite[\S 4]{Saint-Donat74}}]
The linear system $\abs{L}$ has no base points, and hence defines a morphism
\[ \phi_{L} \colon X \rightarrow \P^{g}. \]
There are two cases.
\begin{enumerate}[label=\arabic*.]
    \item If $\abs{L}$ contains a non-hyperelliptic curve of genus $g$, then $\phi_{L}$ is birational onto a surface of degree $2g - 2$ with only isolated rational double points.
    \item If $\abs{L}$ contains a hyperelliptic curve, then $\phi_{L}$ is a generically 2-to-1 mapping of $X$ onto a surface $\mathbb{F}$ of degree $g-1$.
\end{enumerate}
\end{thm}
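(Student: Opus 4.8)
The plan is to establish the three assertions --- base-point freeness of $\abs{L}$, the bound $\deg\phi_L\le 2$ onto the image, and the description of $\phi_L(X)$ in each case --- by reducing everything to the canonical map of a general hyperplane section. First I would settle base-point freeness. The hypotheses force $L$ to be nef: an irreducible curve $\Gamma$ with $L\cdot\Gamma<0$ would lie in every member of $\abs{L}$ and hence be a fixed component. Together with $L^2>0$ this makes $L$ nef and big, so $h^1(X,L)=0$ (Kawamata--Viehweg vanishing, using $\omega_X\cong\O_X$) and $h^2(X,L)=h^0(X,L^{-1})=0$ by Serre duality, and then $h^0(X,L)=g+1$ by Riemann--Roch. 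To conclude that $\abs{L}$ is base-point free I would either invoke the structure theory of complete linear systems on K3 surfaces --- a nef and big line bundle is globally generated unless it carries a fixed $(-2)$-curve, which is excluded here --- or argue directly: a general $C\in\abs{L}$ is connected because $h^1(\O_X)=h^1(L)=0$, adjunction gives $L|_C\cong\omega_C$, surjectivity of $H^0(X,L)\to H^0(C,\omega_C)$ follows from $H^1(X,\O_X)=0$, and the canonical system of an integral Gorenstein curve of genus $g\ge 2$ is base-point free; since every base point of $\abs{L}$ lies on $C$, we are done. \emph{I expect this step to be the main obstacle:} in the second route one needs that a general member of $\abs{L}$ is integral, a Bertini-type statement away from the base locus, and pinning it down is precisely where the K3-specific classification of linear systems is used.

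Next I would bound the degree. Set $Y=\phi_L(X)\subseteq\P^g$, let $d=\deg Y$, and let $m$ be the degree of the generically finite morphism $\phi_L\colon X\to Y$, so that $2g-2=L^2=m\cdot d$. Since $\abs{L}$ is complete, $Y$ is nondegenerate in $\P^g$, and a nondegenerate irreducible surface in $\P^g$ has degree at least $g-1$; hence $m(g-1)\le md=2(g-1)$, so $m\in\{1,2\}$. The bridge to the two cases is that for a general member $C\in\abs{L}$, which is smooth of genus $g$, the restriction $\phi_L|_C$ is the canonical map of $C$: by the surjectivity above and a dimension count, $\abs{L}$ cuts out on $C$ the complete linear system $\abs{L|_C}=\abs{\omega_C}$.

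Finally I would read off the geometry, the dichotomy being governed by whether $C$ is hyperelliptic. If the general member $C$ is hyperelliptic --- equivalently $\abs{L}$ contains a hyperelliptic curve, since the hyperelliptic locus is closed among smooth members --- then $\phi_L|_C=\phi_{\omega_C}$ is $2\colon\!1$ onto a rational normal curve of degree $g-1$; as the general point of $C$ is a general point of $X$, this forces $m=2$ and $d=g-1$. A nondegenerate surface of minimal degree $g-1$ in $\P^g$ is, by the classical classification of varieties of minimal degree, a rational normal scroll --- i.e. a Hirzebruch surface --- a cone over a rational normal curve, or (when $g=5$) the Veronese surface, and $\phi_L$ exhibits $X$ as a generically $2\colon\!1$ cover of this surface $\mathbb{F}$. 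If instead $C$ is non-hyperelliptic (so $g\ge 3$), then $\phi_L|_C=\phi_{\omega_C}$ is a closed embedding; a general hyperplane section of $Y$ is an irreducible nondegenerate curve of degree $\deg Y$ in $\P^{g-1}$, so if $m$ were $2$ it would be a rational normal curve and its preimage, a general member of $\abs{L}$, would be hyperelliptic --- contradiction. Hence $m=1$, $d=2g-2$, and $\phi_L\colon X\to Y$ is birational. Its contracted curves are exactly the irreducible $\Gamma$ with $L\cdot\Gamma=0$; the Hodge index theorem puts such $\Gamma$ in $L^{\perp}$, where the intersection form is negative definite, so $\Gamma^2<0$, and adjunction on the K3 then forces $\Gamma^2=-2$. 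Thus the contracted curves are $(-2)$-curves, their connected configurations are negative definite and hence of ADE type, and contracting them produces rational double points; therefore $Y$ has only isolated rational double points and $\phi_L$ is its minimal resolution.
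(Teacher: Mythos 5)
The paper offers no proof of this statement --- it is quoted from Saint-Donat --- and your outline follows the same classical route: base-point freeness, then $L^2 = m\cdot\deg Y$ together with $\deg Y\ge g-1$ for a nondegenerate surface in $\P^g$ to force $m\le 2$, then the dichotomy read off from the canonical map of a hyperplane section. That skeleton is correct, and your degree bound, the identification of $\phi_L|_C$ with $\phi_{\omega_C}$, and the analysis of contracted curves as ADE configurations of $(-2)$-curves are all sound. But the step you flag is a real gap, not a technicality: base-point freeness via restriction to a general member requires that member to be integral, and your fallback of ``invoking the structure theory of linear systems on K3 surfaces'' is circular, since that structure theory \emph{is} the content of Saint-Donat's paper. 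Establishing that a fixed-component-free $\abs{L}$ with $L^2>0$ has irreducible general member (the alternative being $L$ a multiple of an elliptic pencil, excluded by $L^2>0$) has to be done by hand and is where much of the work lives.

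There is a second, unacknowledged gap in case 1. Showing that the contracted curves are $(-2)$-curves in negative definite configurations only tells you that the \emph{normalization} of $Y$ acquires rational double points; it does not show that $Y$ itself is normal. A priori a birational $\phi_L$ could identify two disjoint curves, or fail to be an immersion along a curve, producing a non-normal image singular along a one-dimensional locus --- which would contradict ``only isolated rational double points.'' Ruling this out, i.e.\ proving that $\abs{L}$ separates points and tangent vectors away from the contracted configurations (typically by exhibiting, through any two candidate points, an integral member of $\abs{L}$ on which $\phi_L$ restricts to the canonical embedding), is the technical core of Saint-Donat's argument and is absent from your sketch. Finally, a small logical slip: the equivalence ``some smooth member hyperelliptic $\iff$ the general member is hyperelliptic'' does not follow from closedness of the hyperelliptic locus (closedness points the wrong way); it follows from your own degree argument, since $m\in\{1,2\}$ is a single invariant of $\phi_L$ that forces every smooth member simultaneously to be canonically embedded ($m=1$) or canonically $2\colon\!1$ ($m=2$).
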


We refer to the second case as a \textit{hyperelliptic} K3. Throughout the paper, $(X, L)$ will denote a very general K3 surface and $(X_0,L_0)$ a hyperelliptic K3 surface. Recall the following result due to Reid:

\begin{thm}[\cite{Reid76}]
For any hyperelliptic polarized K3 surface $(X_0,L_0)$ with $g\geq 3$, the image $\phi_{L_0}(X_0)$ is a Hirzebruch surface $\mathbb{F}_{n}$ ($n = 0, 1, 2, 3, 4$), with ramification curve $R \in \abs{-2K_{\mathbb{F}_{n}}}$. Furthermore, $n=0$ occurs when $g$ is odd, and $n=1$ occurs when $g$ is even.
\end{thm}

Hyperelliptic K3 surfaces form a Noether-Lefschetz divisor in $\K_{g}$, so any primitively polarized $(X,L) \in \K_{g}$ can be specialized to a hyperelliptic $(X_0,L_0)$ covering $\mathbb{F}_0$ or $\mathbb{F}_1$. The Picard group of such a hyperelliptic K3 has rank $\geq 2$, and contains the primitive lattice
\[ \langle L_0,M_0\rangle = \begin{pmatrix} 2g-2 & 2 \\ 2 & 0 \end{pmatrix}. \]
To see this, there are two cases:
\begin{itemize}
\item If $g=2r+1$, it suffices to study the hyperelliptic K3 surfaces $X_0 \xrightarrow{\pi} \mathbb{F}_{0} \cong \P^{1} \times \P^{1}$ branched along a smooth curve $B\subset \mathbb{F}_0$ of bidegree $(4,4)$. The line bundle $L_0 = \pi^{\ast}\O(1, r)$ gives a polarization of genus $2r+1$, and along with $M_0=\pi^{\ast}\O(0,1)$ forms the sublattice above.  The moduli space of $(4,4)$-curves has dimension
\[ \dim |(4,4)| - \dim \Aut(\P^1\times \P^1)= 18. \]
\item If $g=2r$, it suffices to study the hyperelliptic K3 surfaces $X_0 \xrightarrow{\pi} \mathbb{F}_{1}$ branched along a smooth curve $B\in |4e+6f|$, where $e$ is the section with $e^2=-1$, and $f$ is the class of a fiber. The line bundle $L_0 = \pi^{\ast}\O(e+rf)$ gives a polarization of genus $2r$, and along with $M_0=\pi^{\ast} \O(f)$ forms the sublattice above. The moduli space of branch curves $B$ in $\mathbb{F}_1$ has dimension
\[ \dim |4e+6f| - \dim \Aut(\Bl_0\P^2) = 18. \]
\end{itemize}

For the rest of the paper, we will focus on the first case when the genus $g$ is odd; however, the argument is similar for the even genus case.

\begin{set-up}\label{setup:hyperelliptic K3}
Let $(X_0,L_0)$ be a hyperelliptic K3 surfaces of genus $g = 2r + 1$ which is a double cover of $\P^{1} \times \P^{1}$ branched along a smooth $(4,4)$-curve $B$, with $L_0 \colonequals \pi^{\ast}\O(1, r)$. 
\end{set-up}

\begin{center}
\includegraphics[width=10cm]{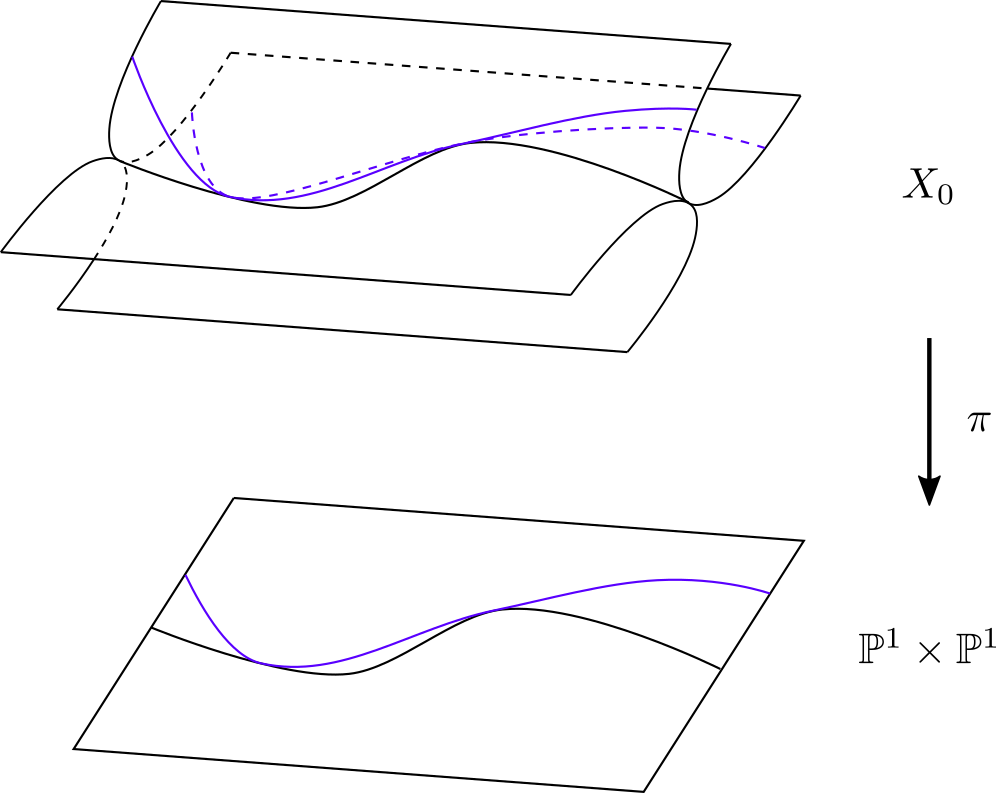}
\end{center}

\begin{remark}\label{tangentnodes}
Notice that divisors in $\P^{1} \times \P^{1}$ which are ordinarily tangent to the branch curve $B$ give rise to nodal curves on the hyperelliptic K3 surface. On the other hand, $H^{0}(\P^{1} \times \P^{1}, \O(1, r)) \cong H^{0}(X_0, \pi^{\ast}\O(1, r))$ so the image under $\pi$ of an integral nodal curve in $\abs{\pi^{\ast}\O(1, r)}$ must lie in $\abs{\O(1, r)}$. This image is a smooth $\P^{1}$, and hence has no nodes. Therefore, the $\delta$-nodal curves on $X_{0}$ must come from divisors in $\P^1\times \P^1$ which are $\delta$ times ordinarily tangent to $B$.
\end{remark}

%
%

\section{Genus one stable maps on a general K3 surface}

Let $X$ be a smooth projective variety and $\beta\in H_2(X,\Z)$. The Kontsevich moduli stack $\overline{\M}_g(X,\beta)$ of genus $g$ stable maps to $X$ in class $\beta$ is a proper Deligne-Mumford stack \cite{FP96} whose $\mathbb{C}$-points parametrize morphisms
\[ f \colon C \rightarrow X, \]
where $C$ is a connected nodal curve of arithmetic genus $g$, $f_{\ast}[C]=\beta$, and $\abs{\text{Aut}(f)}<\infty$. In a slight abuse of notation, we will often write $L$ instead of the Poincar\'{e} dual of $c_1(L)$.

For $X$ a very general K3 surface, we compare $\overline{\M}_{1}(X, L)$ with the compactified Severi curve $\overline{V}^{L, g-1}$. Let $\overline{\M}_1^\circ(X,L)$ denote the closed-open substack of stable maps with irreducible domain curve, which is a scheme.
The following result has appeared in \cite{Chen19}, but we would like to rephrase it in our notation.

\begin{lem}\label{lem:worstnodal}
On a very general primitively polarized K3 surface $(X, L)$, the generic element $[f]\in \overline{\M}_1^\circ(X,L)$ has image which is at worst nodal.
\end{lem}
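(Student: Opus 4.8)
The plan is to study a generic stable map $f\colon C \to X$ with $C$ an irreducible (arithmetic genus one) nodal curve mapping to a curve in $|L|$, and to rule out that the image acquires singularities worse than nodes. The starting point is a dimension count: since $X$ is very general, $\Pic(X) = \Z\cdot L$, so any curve in the image class is a member of $|L|$, and the Severi variety $V^{L,g-1}$ is a smooth curve (by the Propositions recalled in Section~\ref{sec:background severi}). Thus the images of generic elements of $\overline{\M}_1^\circ(X,L)$ sweep out a one-dimensional family of curves in $|L|$. First I would argue that the normalization map factors: a genus one stable map with irreducible domain either has normalized domain of geometric genus one (mapping birationally, or with a contraction) or is a multiple cover; multiple covers contribute at most finitely many components since the underlying reduced image curves vary in a family of dimension strictly less than one unless the degree is one. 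Hence the generic $[f]$ has image $C' \in |L|$ a genuinely one-parameter family of curves, and the map $C\to C'$ is the normalization of $C'$ (which then has geometric genus $\le 1$, forcing exactly $g-1$ or $g$ as the number of ``node-equivalent'' singularities).

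Next I would invoke the key input already available: the (compactified) Severi curve $\overline{V}^{L,g-1}$ is the closure of curves with exactly $g-1$ nodes, and it is one-dimensional, so it accounts for a full one-parameter family of curves of geometric genus one. The claim then reduces to showing there is no \emph{other} one-parameter family of curves in $|L|$ whose members have geometric genus one but singularities worse than nodal (e.g.\ a cusp plus $g-2$ nodes, or a tacnode, etc.). The standard tool here is Chen's theorem \cite{Chen02}, cited in the introduction: on a general K3 surface all rational curves in $|L|$ are nodal. I would extend the same deformation-theoretic argument one step: a curve $C'\in|L|$ with a worse-than-nodal singularity and geometric genus one lies in the closure of a family of higher-geometric-genus curves, and by the genericity of $(X,L)$ such equisingular loci have the ``expected'' codimension, which exceeds one for non-nodal singularity types with the same $\delta$-invariant. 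Concretely, a unibranch singularity of $\delta$-invariant $k$ imposes strictly more than $k$ conditions on $|L|$ on a general K3 (this is precisely the content of the Chen-type argument via the fact that $V^{L,\delta}$ has the expected dimension $g-\delta$ and its closure contains only nodal curves generically), so the locus of such curves has dimension $\le g - (g-1) - 1 = 0$, finitely many points, not a component of $\overline{\M}_1^\circ(X,L)$.

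The remaining case is when the stable map contracts a genus-one component and maps a rational tail to $X$: then the image is a rational curve in $|L|$, which by \cite{Chen02} is nodal, so again the image is at worst nodal. Assembling these cases: away from finitely many exceptional points, every $[f]\in\overline{\M}_1^\circ(X,L)$ has image either a nodal genus-one curve varying in $\overline{V}^{L,g-1}$, or a nodal rational curve; in both cases the image is at worst nodal.

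\medskip
\noindent\textbf{Main obstacle.} The crux is the equisingularity dimension count on a \emph{general} K3 surface — i.e.\ showing that worse-than-nodal singularities of a given $\delta$-invariant impose strictly more conditions than nodes. On arbitrary surfaces this can fail, so the argument genuinely needs the very-general hypothesis and an appeal to (or re-derivation of) the Chen-style transversality result; making precise which singularity types must be excluded, and citing the sharpest available form of \cite{Chen02} or \cite{CS97}, is where the real work lies. A secondary subtlety is handling reducible-looking limits inside $\overline{\M}_1^\circ$ (the domain is irreducible but may be singular), where one must be careful that ``at worst nodal image'' is a closed condition that passes to the generic point of each component.
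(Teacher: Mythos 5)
There is a genuine gap here, and you have in fact flagged it yourself under ``Main obstacle'': the entire content of the lemma is the transversality statement that worse-than-nodal singularity types (cusps, tacnodes, etc.) impose \emph{strictly more} conditions on $\abs{L}$ than nodes of the same $\delta$-invariant, so that the corresponding equisingular loci have dimension $\leq 0$. This is not supplied by \cite{Chen02}, which is a theorem about \emph{rational} curves proved by a delicate degeneration argument specific to geometric genus zero; there is no ``one more step'' of the same argument available off the shelf for geometric genus one, and asserting that such loci have ``the expected codimension'' by genericity of $(X,L)$ is exactly the claim to be proved. As written, your argument reduces the lemma to an unproven statement that is at least as strong as the lemma itself. (A secondary issue: your final paragraph about contracted genus-one components with rational tails does not arise in $\overline{\M}_1^\circ(X,L)$, whose domains are irreducible by definition; the only degenerate domains there are irreducible nodal rational curves of arithmetic genus one.)

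The paper's proof avoids any equisingularity dimension count by a different mechanism: for a component $W$ of $\overline{\M}_1^\circ(X,L)$ whose generic member has worse-than-nodal image, consider the forgetful map $\mu\colon W \to \overline{M}_1$. If $\mu$ is dominant, then $W$ contains a stable map over the boundary point of $\overline{M}_1$, whose domain is an irreducible nodal rational curve; since ``worse than nodal image'' propagates to this point, one gets a non-nodal rational curve in $\abs{L}$, contradicting \cite{Chen02}. If $\mu$ is constant, the universal curve over $W$ is (after normalization, semistable reduction, and a base change along a multisection) a product of a fixed elliptic curve with another curve dominating $X$, which contradicts Moonen's Theorem B for very general K3 surfaces. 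You may want to redo your argument along these lines: the dichotomy ``$j$-invariant varies / $j$-invariant constant'' is what lets one import Chen's genus-zero theorem rather than reprove a genus-one analogue of it.
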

\noindent By Lemma~\ref{lem:worstnodal}, we know that $\overline{\M}_{1}^{\circ}(X,L)$ is generically reduced because the Zariski tangent space to the moduli space at an immersed curve $[f] \in \overline{\M}_{1}^{\circ}(X,L)$ is 1-dimensional. By Proposition~\ref{prop:SeveriSmooth}, the Severi variety $V^{L,g-1}$ is reduced. It follows that:
\begin{cor}\label{severikontsevich}
Let $(X,L)$ be a very general K3 surface. There is a birational morphism
\[ \overline{\M}_1^{\circ}(X,L)  \to  \overline{V}^{L,g-1} \]
on each component, defined by taking the image of a stable map. In particular, they have the same geometric genus.
\end{cor}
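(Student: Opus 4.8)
The plan is to realise the claimed morphism as the ``take the image'' map and then deduce birationality from Lemma~\ref{lem:worstnodal}, using Lemma~\ref{lem:ghost} to see that $\overline{\M}_1^\circ(X,L)$ is the correct source.

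First I would construct the morphism. Fix $[f\colon C\to X]\in\overline{\M}_1^\circ(X,L)$, so that $C$ is integral of arithmetic genus $1$ and $f_\ast[C]=c_1(L)$. Since $(X,L)$ is very general we have $\Pic(X)=\Z\cdot L$, so $c_1(L)$ is primitive; hence $f$ cannot be a nontrivial cover onto its image and is birational onto the integral curve $Y_f:=f(C)$, which has class $c_1(L)$ and therefore lies in $\abs{L}\cong\P^{g}$ (indeed on a Picard rank one K3 every member of $\abs{L}$ is integral). Applied to the universal stable map, this exhibits the cycles $f_\ast[C]$ as a family of reduced curves of class $c_1(L)$ over the (reduced) base, i.e.\ the cycle map into the Chow variety of the class $c_1(L)$, which on such a surface is canonically $\abs{L}$; thus we obtain a morphism $\nu\colon\overline{\M}_1^\circ(X,L)\to\abs{L}$, $[f]\mapsto Y_f$. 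As $\overline{\M}_1^\circ(X,L)$ is a union of components of the proper stack $\overline{\M}_1(X,L)$ — here Lemma~\ref{lem:ghost} guarantees the elliptic-ghost locus is disjoint from it — it is itself proper, so $\nu$ has closed image.

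Next I would identify the image. The normalization of $Y_f$ agrees with that of $C$, which has genus $0$ or $1$, so $Y_f$ has geometric genus $\le 1$ and, if nodal, has exactly $g-1$ or $g$ nodes. By Lemma~\ref{lem:worstnodal}, on each component $W$ of $\overline{\M}_1^\circ(X,L)$ the generic $Y_f$ is nodal; it cannot be generically $g$-nodal (i.e.\ rational), because the rational members of $\abs{L}$ form the finite set $V^{L,g}$, and over a fixed $g$-nodal rational curve $Y_0$ the only genus-$1$ stable maps birational onto $Y_0$ arise from the normalization $\P^1\to Y_0$ by regluing exactly one of the $g$ pairs of preimages of the nodes, so the $\nu$-fibre over $Y_0$ is finite — whereas every component of $\overline{\M}_1(X,L)$ has dimension at least the reduced virtual dimension $1$ and so cannot map to a finite set. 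Hence the generic member of each $W$ lies in $V^{L,g-1}$, so $\nu(W)$ is an irreducible closed curve inside $\overline{V}^{L,g-1}=\overline{V^{L,g-1}}$, i.e.\ one of its components; conversely every $(g-1)$-nodal curve equals $\nu$ of its smooth genus-$1$ normalization, so $V^{L,g-1}$ lies in the closed image. Therefore $\nu$ surjects onto $\overline{V}^{L,g-1}$ and induces a bijection on irreducible components.

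Finally I would check birationality and conclude. For $Y$ a general point of a component of $\overline{V}^{L,g-1}$ — which lies in $V^{L,g-1}$, hence is an integral $(g-1)$-nodal curve with smooth genus-$1$ normalization $\tilde Y$ — the fibre $\nu^{-1}(Y)$ is a single point: any $[f]$ in it is birational onto $Y$ with integral genus-$1$ domain, hence factors through $\tilde Y\to Y$, and then $C\to\tilde Y$ is a birational morphism of smooth curves, thus an isomorphism, while $\tilde Y\to Y\hookrightarrow X$ has trivial automorphisms and so is a stable map. Over $\C$ a dominant, generically one-to-one morphism of curves is birational, so $\nu$ restricts to a birational morphism from each component of $\overline{\M}_1^\circ(X,L)$ onto the corresponding component of $\overline{V}^{L,g-1}$; since a birational morphism of reduced curves (inducing a bijection on components, as here) gives an isomorphism of normalizations, the two curves have isomorphic normalizations and hence the same total geometric genus. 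I expect the only real content beyond bookkeeping to be the well-definedness of the image map as a morphism — this is where primitivity of $c_1(L)$ and the cycle-map formalism matter — and the dimension argument ruling out a contracted component or a component with generically rational image; the latter is precisely the point at which the very general hypothesis, through Lemma~\ref{lem:worstnodal} (together with the control of degenerations from Lemma~\ref{lem:ghost} and \cite{Chen02}), is indispensable.
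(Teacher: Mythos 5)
Your proposal is correct and follows the route the paper intends: the morphism is the image/cycle map, Lemma~\ref{lem:ghost} gives properness of $\overline{\M}_1^{\circ}(X,L)$, Lemma~\ref{lem:worstnodal} plus primitivity of $c_1(L)$ (degree-one onto an integral nodal image) identify the generic image, and the generic fibre is the normalization map, giving birationality component by component. The extra details you supply --- ruling out generically rational image via finiteness of $V^{L,g}$ and the reduced virtual-dimension lower bound, and the fibre computation over a general $(g-1)$-nodal curve --- are exactly the bookkeeping the paper leaves implicit, and they are correct.
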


Since $c_1(L)$ is primitive and irreducible, the only excluded components of $\overline{\M}_{1}(X, L)$ consist of stable maps with a contracted component of genus one.
\begin{definition}\label{def:ghost}
If a stable map $f:C\to X$ contracts a component $C_0\subset C$, then we call $C_0$ a \textit{ghost} or \textit{ghost component}.
\end{definition}

\noindent In the case of $\overline{\M}_1(X,L)$, a map with an elliptic ghost will have rational image. Each component of $\overline{\M}_1(X,L)$ consisting of maps with elliptic ghosts is of the form $R\times \overline{\M}_{1,1}$, where $R\in |L|$ is a rational curve.

\begin{lem}\label{lem:ghost}
For $(X,L)$ very general, the components of $\overline{\M}_{1}(X, L)$ consisting of maps with elliptic ghosts are all disjoint from $\overline{\M}_1^\circ(X,L)$.
\end{lem}

\begin{proof}

Since $\overline{\M}_{1}(X, L)$ is proper, any family of stable maps has a unique limit. Consider a family in $\overline{\M}_1^\circ(X,L)$ limiting to a stable map with rational image.  By \cite{Chen02}, the image must be nodal, so a flat limit may be obtained by allowing the domain to acquire a node. Such a stable map has no ghost component.
\end{proof}

\begin{remark}\label{remark:virtualclass}
The Kontsevich moduli stack $\overline{\M}_1(X,L)$ carries a perfect obstruction theory $E^\bullet$ coming from reduced Gromov-Witten theory (see $\S \ref{subsec:VFC}$ for more details) and an associated virtual fundamental class
\[ [\overline{\M}_1(X,L)]^{\vir}\in CH_{1}( \overline{\M}_1(X,L),\Q). \]
Throughout the paper, we will be working with reduced virtual classes because $X$ is a K3 surface. On $\overline{\M}_1^{\circ}(X,L)$, the virtual class agrees with the fundamental class, since it has dimension 1. To bound the geometric genus of this curve, let $\mathcal{X} \to \Delta$ be a hyperelliptic specialization such that $\mathcal{X}_t\cong X$ and $\mathcal{X}_0=X_0$ is a hyperelliptic K3 surface. Now consider the flat limit of $\overline{\M}_1^{\circ}(X,L)$ inside $\overline{\M}_1(X_0,L_0)$, which we denote by
\[ \overline{\M}^{\mathrm{lim}}_1(X_0,L_0) \subset \overline{\M}_1(X_0,L_0). \]
\noindent We will show that on the hyperelliptic K3, the class $[\overline{\M}_1(X_0,L_0)]^{\vir}$ decomposes into a sum of $[\overline{\M}^{\mathrm{lim}}_1(X_0,L_0)]$ together with an effective cycle class contribution from the ghost components (see \S\ref{subsec:definvariance}).
\end{remark}

The curve $\overline{\M}^{\mathrm{lim}}_1(X_0,L_0)$ may have nonreduced components, but we will show that it contains a collection of reduced components with large total geometric genus. The following lemma will then give us a lower bound for the total geometric genus of the compactified Severi curve.

\begin{lem}\label{lemma:semicontinuityGeomGenus}
Let $\mathcal V \to \Delta$ be a 1-parameter flat family of (possibly singular) curves with reduced general fiber. Decompose the central fiber into $V_{r} \cup V_{nr}$, where $V_{r}$ consists of all reduced components, and $V_{nr}$ consists of all nonreduced components. Then the general fiber $V_{t}$ has total geometric genus bounded from below by:
\[ \genus(\mathcal V_t) \geq \genus(V_r). \]
\end{lem}
\begin{proof}
After base change and normalization of the total space $\mathcal V$, we may assume that the general fiber $\mathcal V_t$ is a smooth curve (possibly disconnected), and the monodromy action on the connected components is trivial. Next, we apply stable reduction to each component separately. The algorithm in \cite{Knudsen} uses a sequence of prime order base changes, normalizations, and blow ups to produce a semi-stable model. Each of these operations can only raise the geometric genus of a given component of $V_r$, and all nonrational components will persist to the final stable model. The result now follows from lower semi-continuity of geometric genus in a family of stable curves.
\end{proof}

%
%

\section{Genus one stable maps on a hyperelliptic K3 surface}\label{sec:hyperstable}

In this section, we first recall the moduli stacks defined by Bryan and Leung in \cite[\S 4]{BL00} to deal with multiple covers of nodal fibers of elliptic K3 surfaces. We then define a large component of $\overline{\M}_{1}(X_0,L_0)$ on the hyperelliptic K3 surface $X_0$, and identify a divisor in that component with a certain product of moduli stacks coming from \cite{BL00}.

\subsection{Bryan-Leung spaces.}

The surface $Y$ under consideration in \cite{BL00} is an elliptically fibered K3 surface over $\P^1$ with a unique section $S$ and 24 singular nodal fibers $N_{1}, \ldots, N_{24}$. Let $F$ be the class of the fiber, so that
\[ F^{2} = 0, \quad F \cdot S = 1, \quad S^{2} = -2. \]

\begin{definition}
We say that a sequence $s = \{ s_{n} \}$ is \textit{admissible} if each $s_{n}$ is a positive integer and the index $n$ runs from $-j, -j + 1, \ldots, k$ for some integers $j, k \geq 0$. Write $\abs{s}$ for $\sum_{n} s_{n}$. 
\end{definition}

\begin{enumerate}[label=\roman*.]
    \item For $\vec{\bm{a}}=(a_1,\ldots,a_{24}) \in \Z_{\geq 0}^{24}$, let $\smash{\overline{\M}}\vphantom{M}^{\BL}_{\vec{\bm{a}}}$ be the moduli stack of stable, genus 0 maps to $Y$ with image $S + \sum_{i=1}^{24} a_{i} N_{i}$.
    \item Let $\smash{\overline{\M}}\vphantom{M}^{\BL}_{a}$ be the moduli space of stable, genus 0 maps to $Y$ with image $S + aN$ for any fixed nodal fiber $N$.
    \item Let $\Sigma(a)$ be a genus 0 nodal curve consisting of a linear chain of $2a+1$ smooth rational components $\Sigma_{-a}, \ldots, \Sigma_{a}$ with an additional smooth rational component $\Sigma_{\ast}$ meeting $\Sigma_{0}$ (so that $\Sigma_{n} \cap \Sigma_{m} = \emptyset$ unless $\abs{n-m} = 1$ and $\Sigma_{\ast} \cap \Sigma_{n} = \emptyset$ unless $n = 0$). Given an admissible sequence $s(a)=\{ s_{n}(a) \}$ with $\abs{s(a)} = a$, we define $\smash{\overline{\M}}\vphantom{M}^{\BL}_{s(a)}$ to be the moduli space of genus 0 stable maps with $\Sigma(a)$ as the target in the class
\[ \Sigma_{\ast} + \sum_{n=-a}^{a} s_n(a) \Sigma_n.\]
The admissible sequence has been extended by zero (if necessary) so that $n$ runs from $-a$ to $a$.
\end{enumerate}

\begin{center}
\includegraphics[width=14cm]{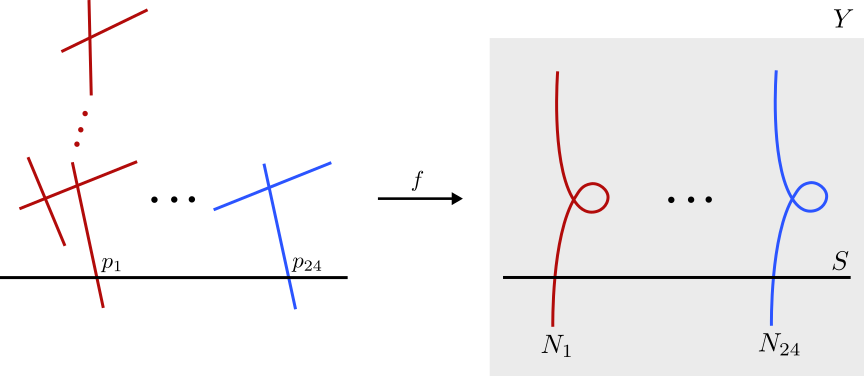}
\end{center}

With this notation in mind, Bryan and Leung prove the following result:

\begin{lem}[Lemma 5.3 of \cite{BL00}]\label{lem:admissible sequence}
For a fixed nodal fiber $N$, the moduli space $\smash{\overline{\M}}\vphantom{M}^{\BL}_{a}$ is a disjoint union $\coprod_{s(a)} \smash{\overline{\M}}\vphantom{M}^{\BL}_{s(a)}$ labeled by admissible sequences $s(a) = \{ s_n(a)\}$ with  $\abs{s(a)}=a$.
\end{lem}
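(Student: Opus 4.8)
My plan is to show that every genus-$0$ stable map to $Y$ in class $S+aN$ has a rigid combinatorial backbone, to resolve each such map canonically through a fixed map $\Sigma(a)\to Y$, and to upgrade the resulting pointwise dictionary to an isomorphism $\coprod_{|s(a)|=a}\overline{\M}_{s(a)}\xrightarrow{\sim}\overline{\M}_a$ of stacks.

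\emph{Step 1: structure of a map $f\colon C\to Y$ with $f_*[C]=S+aN$.} Since $F\cdot(S+aN)=F\cdot S=1$, the composite $C\to Y\to\P^1$ has exactly one non-contracted component, of degree $1$ onto $\P^1$; as $S$ is the unique section, this component $C_0$ maps isomorphically onto $S$ (a smooth rational curve). All other components are vertical; a non-constant map from a rational curve into a fiber of $Y$ is impossible for the smooth fibers, so such a component maps onto a nodal fiber, and comparing with the image cycle $S+aN$ forces every non-contracted vertical component onto the single fiber $N$ with total degree $a$, and every contracted component to a point of $S\cup N$. As $\overline{\M}_a$ carries no marked points, a contracted $\P^1$ needs $\ge 3$ nodes, so no contracted component is a leaf of the dual tree of $C$; a short tree-chasing argument then shows all contracted components lie over $N$ and that $C_0$ meets the rest of $C$ only over $S\cap N=\{p\}$, a smooth point of $N$. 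Since $C_0\xrightarrow{\sim}S$, there is a unique such meeting point $x_0$, so the closure $D:=\overline{C\setminus C_0}$ is connected, attached to $C_0$ at $x_0$, with $f(D)\subseteq N$ and $f_*[D]=a[N]$.

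\emph{Step 2: the canonical lift.} Fix the map $\pi_a\colon\Sigma(a)\to Y$ that sends $\Sigma_*$ isomorphically onto $S$ and each $\Sigma_n$ onto $N$ by the normalization $\tilde N\cong\P^1\to N$, glued so that $\Sigma_n\cap\Sigma_{n+1}$ maps to the node $q\in N$ and $\Sigma_*\cap\Sigma_0$ maps to $p$. Composition with $\pi_a$ sends a stable map to $\Sigma(a)$ of class $\Sigma_*+\sum_n s_n\Sigma_n$ to one on $Y$ of class $S+(\sum_n s_n)N$, which is how $\overline{\M}_{s(a)}$ sits in $\overline{\M}_a$ when $|s(a)|=a$. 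I claim every $f$ as in Step 1 lifts \emph{uniquely} through $\pi_a$: lift $C_0$ onto $\Sigma_*$ and the component of $D$ through $x_0$ onto $\Sigma_0$, then propagate the lift along the tree $D$ — each irreducible component maps onto a single $\Sigma_n$, and a node of $D$ lying over $q$ shifts the index $n$ by $\pm1$ — with no monodromy obstruction because $D$ is simply connected. Let $s_n$ be the degree of the lift onto $\Sigma_n$; then $\{n:s_n\ge 1\}$ is an interval, since consecutive non-contracted components of $D$ along any path differ in index by at most $1$ (a shared node, or a shared contracted cluster, forces adjacency in the chain), so every intermediate index is attained — equivalently, the image of the connected curve $D$ in the chain is connected. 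Thus $s(a)$ is admissible, and as $\sum_n s_n=a$ with each $s_n\ge1$ on its support, the support has length $\le a$ and the lift never leaves $\{-a,\dots,a\}$, so it genuinely lands in $\Sigma(a)$. Contracted components remain contracted under the lift and under $\pi_a$, so stability is preserved; lifting and composition with $\pi_a$ are mutually inverse and functorial in families, giving the isomorphism $\coprod_{|s(a)|=a}\overline{\M}_{s(a)}\xrightarrow{\sim}\overline{\M}_a$.

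\emph{Step 3: disjointness.} The sequence $s(a)$ attached to $f$ is intrinsic, being the degree sequence of the canonical lift, so distinct admissible sequences give disjoint substacks of $\overline{\M}_a$. Each $\overline{\M}_{s(a)}$ is proper and $\overline{\M}_a$ is separated, so each image is closed; as finitely many of them partition $\overline{\M}_a$, each is also open, hence a union of connected components, which gives the asserted disjoint union.

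I expect the main obstacle to be Step 2: establishing that the canonical lift exists, is unique, and carries an \emph{admissible} degree sequence — i.e. that a genus-$0$ curve covering the nodal fiber $N$, once pinned to the section, can neither skip an index nor wind inconsistently around the node — and then checking that this dictionary is algebraic in families so as to yield an isomorphism of moduli stacks rather than a mere bijection of points. The rest is formal once Step 1 is in place.
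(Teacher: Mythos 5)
The paper gives no proof of this lemma at all---it is quoted verbatim from \cite{BL00}, Lemma 5.3---so the only comparison available is with Bryan--Leung's original argument, which your reconstruction matches: the unique horizontal component maps isomorphically to the section, the vertical tree lifts uniquely through the chain $\Sigma(a)$ because the genus-$0$ domain is simply connected and each non-contracted component factors through the normalization of $N$, admissibility follows from connectedness of the image in the chain (with $s_0\ge 1$ forced by the attachment point over $S\cap N$), and the open-closed decomposition follows from properness of each $\overline{\M}_{s(a)}$. The argument is correct; the one point you rightly flag---promoting the fiberwise lift to an isomorphism over an arbitrary base---is handled in the standard way, since uniqueness of the lift lets the locally constructed lifts glue.
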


For each $k>0$, notice that the images of stable maps in $\overline{\M}_0(Y,S+kF)$ do not contain any smooth elliptic fibers because the domain is a genus 0 curve.  The Kontsevich moduli stack decomposes as follows:
\begin{align*}
\overline{\M}_0(Y,S+kF) &\cong \coprod_{a_1+\dots+a_{24}=k} \smash{\overline{\M}}\vphantom{M}^{\BL}_{\vec{\bm{a}}} \\
&\cong \coprod_{a_1+\dots+a_{24}=k}\left( \prod_{i=1}^{24} \smash{\overline{\M}}\vphantom{M}^{\BL}_{a_i}\right) \\
&\cong \coprod_{a_1+\dots+a_{24}=k}\left( \prod_{i=1}^{24} \coprod_{s(a_{i})} \smash{\overline{\M}}\vphantom{M}^{\BL}_{s(a_{i})}\right).
\end{align*}
Here, $\smash{\overline{\M}}\vphantom{M}^{\BL}_{s(a_{i})}$ is identified with stable maps to $Y$ which factor through $\Sigma(a_i)$. The $n$-th term in the admissible sequence $s(a_i)$ indicates the degree of the stable map onto the component $\Sigma_n\subset \Sigma(a_i)$. The behavior of the stable map in neighborhoods of distinct nodal fibers is completely independent (see Figure 2 of \cite[page 386]{BL00}).

\subsection{Components of the moduli space on a hyperelliptic K3.}\label{subsec:FFcomponents}
Let $X_0 \rightarrow \P^{1} \times \P^{1}$ be a hyperelliptic K3 surface of genus $g$ branched along a curve $B$ of bidegree $(4,4)$ and $L_0=\pi^{\ast}\O(1,r)$, where $g=2r+1$. The morphism $B \rightarrow \P^{1}$ given by the second projection is of degree 4, and from Riemann-Hurwitz we know that a general (4, 4)-curve has 24 simple ramification points. These ramification points give exactly 24 fibers in the linear series $\abs{\O(0, 1)}$ which are tangent to $B$. Let $R_{i}$ ($i = 1, \ldots, 24$) denote the preimages of these fibers on the K3 surface, which are nodal rational curves. Let $T = \pi^{\ast}\O(1,0)$ be the class of an elliptic bisection and $F = \pi^{\ast}\O(0,1)$ a fiber class, so that
\[ F^{2} = 0, \quad F \cdot T = 2, \quad T^{2} = 0. \]

We now define some relevant substacks of $\overline{\M}_{1}(X_0,L_0)$. 
\begin{itemize}
    \item Let $\Omega \subset \overline{\M}_{1}(X_0, \pi^{\ast}\O(1, 1))$ be the locus of stable maps with smooth domain, whose image in $X_0$ contains exactly two nodes. By Remark~\ref{tangentnodes}, such curves are the pullbacks of irreducible $(1, 1)$-curves twice tangent to $B\subset \P^{1} \times \P^{1}$.
    \item Let $\F \subset \overline{\M}_{1}(X_{0}, L_0)$ be the locus of stable maps whose restriction to a genus 1 component of the domain lies in $\Omega$. The other components in the domain are rational curves that must map into the nodal fibers $R_{i}$. 
\end{itemize} 
Both substacks have proper closures, $\overline{\Omega}$ and $\overline{\F}$. We call $\overline{\F}$ the \textit{fixed fiber} component; it admits a natural forgetful map
\[ \tau \colon \overline{\F} \rightarrow \overline{\Omega}. \]
Occasionally we will also use $\tau$ to denote the map from $\F$ to $\Omega$. For a general $J\in \Omega$, the preimage $\F(J) \colonequals \tau^{-1}(J)$ is a proper divisor in $\overline{\M}_1(X_0,L_0)$ and it admits a decomposition via disjoint unions and products into spaces $\F_{s(a_{i})}(J)$ which are defined below. In general, $\F(J)$ will be proper but not smooth. In order to remedy this, we need to work with obstruction complexes rather than obstruction bundles in \S 4.3.

\begin{enumerate}[label=\roman*.]
    \item For $\vec{\bm{a}} = (a_{1}, a_{2}, \ldots , a_{48}) \in (\Z_{\geq 0})^{48}$, let $\F_{\vec{\bm{a}}}(J)$ be the moduli stack of stable, genus 1 maps to $X_0$ with image $J + \sum_{i=1}^{24} (a_{2i-1}+a_{2i}) R_{i}$. The two coefficients in front of each $R_{i}$ correspond to the fact that the elliptic bisection $J$ meets each $R_{i}$ in two points. 
    \item For any fixed nodal rational fiber $R$, let $\F_{a}(J)$ be the moduli stack of stable, genus 1 maps to $X_0$ with image $J + aR$.
    \item Let $E(a)$ be a genus 1 nodal curve consisting of a linear chain of $2a+1$ smooth rational components $E_{-a}, \ldots, E_{a}$ with an additional smooth elliptic component $E_{\ast}$ meeting $E_{0}$ (so that $E_{n} \cap E_{m} = \emptyset$ unless $\abs{n-m} = 1$ and $E_{\ast} \cap E_{n} = \emptyset$ unless $n = 0$). The moduli stack $\F_{s(a)}(J)$ is defined to be the moduli stack of genus 1 stable maps with $E(a)$ as the target in the class
\[ E_{\ast} + \sum_{n=-a}^{a} s_n(a) E_n.\]
As before, the admissible sequence has been extended by zero so that $n$ lies in $[-a,a]$.
\end{enumerate}

\begin{center}
\includegraphics[width=12cm]{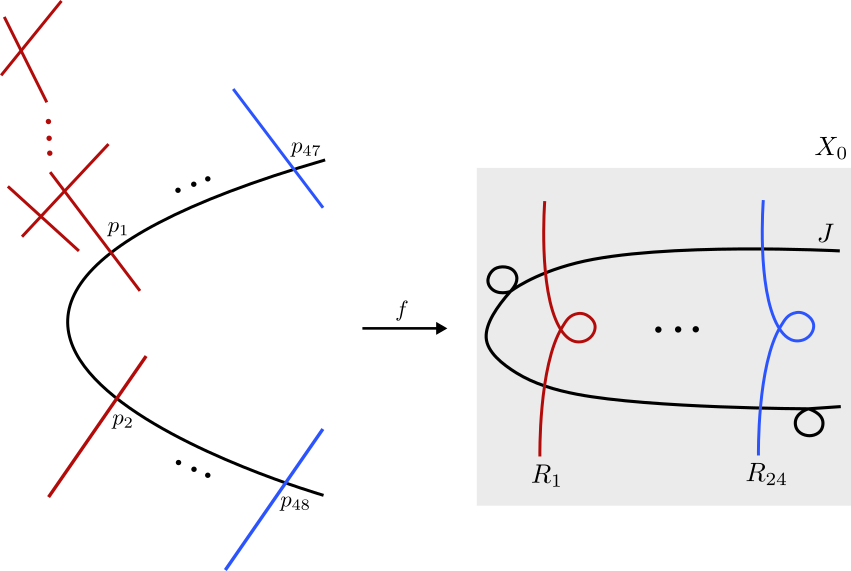}
\end{center}

Along the lines of Lemma \ref{lem:admissible sequence}, one can show that each $\F_{a}(J)$ is a disjoint union
\begin{equation*}
\coprod_{s(a)} \F_{s(a)}(J)
\end{equation*}
of spaces labeled by admissible sequences $s(a) = \{ s_n(a)\}$ with  $\abs{s(a)}=\sum_n s_n(a)=a$.

Then for a general $J\in \Omega$, $\F(J)$ has the following decomposition as in the Bryan-Leung case:

\begin{align}\label{1FF}
\F(J) &= \coprod_{a_1+\dots+a_{48}=r-1} \F_{\vec{\bm{a}}}(J) \\ \nonumber
&= \coprod_{a_1+\dots+a_{48}=r-1} \left( \prod_{i=1}^{48}\F_{a_i}(J) \right) \\ \nonumber
&= \coprod_{a_1+\dots+a_{48}=r-1} \left( \prod_{i=1}^{48} \coprod_{s(a_i)} \F_{s(a_i)}(J) \right)
\end{align}

\subsection{Identification of moduli stacks.} 

The curve $\Sigma(a)$ from the Bryan-Leung setting is almost the same as our curve $E(a)$; the only difference is that the smooth rational component $\Sigma_{\ast}\subseteq \Sigma(a)$ is replaced by a smooth elliptic component $E_{\ast}\subseteq E(a)$. As in their setup, we fix once and for all a map $E(a) \rightarrow X_{0}$, which sends $E_{\ast}$ to $J$ with degree 1 and each rational chain component $E_{n}$ maps to $R$ with degree 1. Other parts of the setups are identical.

\begin{prop}\label{isom moduli}
There is an isomorphism of moduli stacks
\begin{center}
\begin{tikzcd}
\F_{s(a)}(J) \arrow[r, "\sim"] & \smash{\overline{\M}}\vphantom{M}^{\BL}_{s(a)}.
\end{tikzcd}
\end{center}
\end{prop}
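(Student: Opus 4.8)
The plan is to establish the equivalence by a cut-and-reglue construction along the node joining the distinguished tail to the rest of the target, exploiting the observation of Remark~\ref{imageK3map} that $E(a)$ and $\Sigma(a)$ differ \emph{only} in that the tail $E_*$ is a smooth elliptic curve where $\Sigma_*$ is a smooth rational one, while the chains $E_{-a}\cup\cdots\cup E_a$ and $\Sigma_{-a}\cup\cdots\cup\Sigma_a$ and the prescribed homology classes are identical. The underlying principle is that a tail covered with degree $1$ is rigid and carries no automorphisms, irrespective of its genus, so the moduli problem cannot tell the two targets apart.

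First I would pin down the structure of an arbitrary $\C$-point $[f\colon C\to E(a)]$ of $\F_{s(a)}(J)$. Since $E_*$ appears with coefficient $1$ in the class $E_*+\sum_n s_n(a)E_n$, there is a unique irreducible component $C_*\subset C$ dominating $E_*$, and $C_*\to E_*$ has degree $1$; composing with the normalization gives a degree-one morphism of smooth projective curves $\widetilde{C_*}\to E_*$, hence an isomorphism, so (passing to the normalization) $C_*\cong E_*$ is smooth of genus one. The relation $p_a(C)=\sum_i g(\widetilde{C_i})+(\#\mathrm{nodes}-\#\mathrm{components}+1)=1$, together with $g(\widetilde{C_*})=1$, then forces every other component of $C$ to be rational and the dual graph $\Gamma_C$ to be a tree. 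Because $f|_{C_*}$ is injective, while any component adjacent to $C_*$ either meets the chain — so the shared node maps to $E_*\cap E_0=:p_0$ — or is contracted, and a contracted subtree hanging off $C_*$ alone would have an unstable leaf, the image of every node of $C_*$ is $p_0$; hence, for $a\geq 1$, the curve $C_*$ meets $D:=\overline{C\setminus C_*}$ in a single point $t_0$ lying over $p_0$. Thus $C=C_*\cup D$ glued at $t_0$, where $f|_D\colon D\to E_{-a}\cup\cdots\cup E_a$ is a connected genus-zero stable map of class $\sum_n s_n(a)E_n$ carrying a marked point $t_0$ over $p_0$; conversely any such datum, re-glued to a fixed copy of $E_*$ with its identity map, lies in $\F_{s(a)}(J)$. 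Running the identical argument with $p_a(C)=0$ in place of $p_a(C)=1$ shows that a $\C$-point of $\overline{\M}_{s(a)}$ is a $\P^1$ mapping isomorphically onto $\Sigma_*$, glued along a point over $q_0:=\Sigma_*\cap\Sigma_0$ to a connected genus-zero stable map $D'\to\Sigma_{-a}\cup\cdots\cup\Sigma_a$ of class $\sum_n s_n(a)\Sigma_n$ with a marked point over $q_0$.

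Next I would fix an isomorphism of chains $\iota\colon E_{-a}\cup\cdots\cup E_a\xrightarrow{\ \sim\ }\Sigma_{-a}\cup\cdots\cup\Sigma_a$ matching $E_n$ with $\Sigma_n$ and $p_0$ with $q_0$, and let $\mathcal{N}_{s(a)}$ denote the moduli stack of connected genus-zero stable maps to this common chain, of class $\sum_n s_n(a)[\,\cdot\,]$, equipped with one marked point over $p_0\cong q_0$. The analysis above produces morphisms $\F_{s(a)}(J)\to\mathcal{N}_{s(a)}$ and $\overline{\M}_{s(a)}\to\mathcal{N}_{s(a)}$ by ``discard the tail component, remember the node as a marked point,'' and these make sense over an arbitrary base (the tail varies in a point, exactly as in the Bryan--Leung setting). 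The inverse of each glues back, at the marked point, the fixed tail with its identity self-map; this gluing datum is canonical, so both morphisms are isomorphisms of stacks. They are isomorphisms on automorphism groups as well: an automorphism of $[f]$ restricts to $C_*$ as an automorphism commuting with the isomorphism $f|_{C_*}$, hence equals the identity on $C_*$, fixes the node, and is precisely an automorphism of $(D,t_0,f|_D)$. Composing, $\F_{s(a)}(J)\cong\mathcal{N}_{s(a)}\cong\overline{\M}_{s(a)}$.

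I expect the one genuinely delicate point — and the heart of the argument — to be the structural step of the second paragraph: that the arithmetic genus of the domain is forced to concentrate on the single component dominating $E_*$, so that no positive-genus cycle or ghost can hide over the rational chain, and that this component meets the rest of the domain in only one point. Everything downstream is the routine principle that a rigid degree-one tail, elliptic or rational, contributes neither moduli nor automorphisms, so both stacks depend only on the shared data $(E_{-a}\cup\cdots\cup E_a,\ p_0)\cong(\Sigma_{-a}\cup\cdots\cup\Sigma_a,\ q_0)$. It then remains only to dispatch the trivial case $a=0$ and to observe that the construction respects the stratifications of $\F_a(J)$ and $\overline{\M}_a$ by admissible sequences, matching $\F_{s(a)}(J)$ with $\overline{\M}_{s(a)}$ term by term, in parallel with Lemma~\ref{lem:admissible sequence}.
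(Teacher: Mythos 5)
Your proposal is correct and follows essentially the same route as the paper: both rest on the observation that the degree-one tail ($E_{\ast}$ versus $\Sigma_{\ast}$) is rigid and contributes no moduli and no automorphisms, while the rational chains, their intersection pattern, and the prescribed classes coincide, so the two moduli problems are literally the same after swapping the tail. The paper phrases this as a terse functor-of-points comparison (the preimage of the tail in any family is a trivial bundle over the base), whereas you factor both stacks through the moduli of one-pointed genus-zero stable maps to the common chain and supply the pointwise structure analysis explicitly; this is a presentational refinement rather than a different argument.
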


\begin{proof}

For any test scheme $T$, we have
\[ \smash{\overline{\M}}\vphantom{M}^{\BL}_{s(a)}(T) = \left\{
\begin{tikzcd}[column sep=small]
\cC_1 \arrow[rr, "u"] \arrow[rd, swap, "\text{flat}"] & & \Sigma(a) \times T \arrow[ld] \\
& T &
\end{tikzcd}
\right\} \]
where $\cC_1 \rightarrow T$ is a flat family of reducible nodal curves and the fiber over $t \in T$ is a stable map with target
\[ \Sigma_{\ast} + \sum_{n=-a}^{a} s_{n}(a) \Sigma_{n}. \]
Similarly, we have
\[ \F_{s(a)}(J)(T) = \left\{
\begin{tikzcd}[column sep=small]
\mathcal{C}_2 \arrow[rr, "v"] \arrow[rd, swap, "\text{flat}"] & & E(a) \times T \arrow[ld] \\
& T &
\end{tikzcd}
\right\} \]
where the fiber over $t \in T$ is a stable map with target
\[ E_{\ast} + \sum_{n=-a}^{a} s_n(a) E_n. \]

We now define a map of sets
\begin{equation}\label{eqn:morphism of stacks}
\F_{s(a)}(J)(T) \rightarrow \smash{\overline{\M}}\vphantom{M}^{\BL}_{s(a)}(T)
\end{equation} 
as follows. Given a diagram
\begin{center}
\begin{tikzcd}[column sep=small]
\mathcal{C}_2 \arrow[rr, "v"] \arrow[rd, swap, "\text{flat}"] & & E(a) \times T \arrow[ld] \\
& T &
\end{tikzcd}
\end{center}
as above, we may map the component $E_{\ast}$ to $\Sigma_{\ast}$, and there are isomorphisms $E_{n}\cong \Sigma_{n}$ which preserve the intersections between the components. There is a component $\cE$ of $\cC_2$ which maps isomorphically onto $E_{\ast} \times T$; this component will now be replaced with $\Sigma_{\ast} \times T$. Furthermore, $u^{-1}(\Sigma_{\ast} \times T)$ is a $\P^1$-bundle which admits a morphism of $\P^1$-bundles to $T \cong \P^{1} \times T$, so it is trivial. Since one can define an inverse morphism analogously, we conclude that the morphism (\ref{eqn:morphism of stacks}) is an isomorphism. The morphisms in the groupoids are defined similarly and it is easy to check they are isomorphisms.
\end{proof}

%
%

\section{Virtual intersection counts}\label{sec:VirtualIntersectionCounts}
In this section, we will compare the virtual fundamental classes $\smash{\overline{\M}}\vphantom{M}^{\BL}_{s(a)}$ and $\F_{s(a)}(J)$ to refine the isomorphism in Proposition~\ref{isom moduli}. We then use this to prove the main theorem in Section \ref{sec:proofmaintheorem}.

\subsection{Virtual Fundamental Classes.}\label{subsec:VFC}

We review the formalism of Behrend-Fantechi \cite{BF97} for associating a virtual fundamental class to a Deligne-Mumford stack $\fX$ endowed with a perfect obstruction theory $E^\bullet$. 

\begin{definition}
A \textit{perfect obstruction theory} on $\fX$ is a 2-term complex of vector bundles
\[ E^\bullet = [E^0\to E^1] \]
and a morphism $E^\bullet \to \L_\fX$ to the cotangent complex which is an isomorphism on $\mathcal H^0$ and surjective on $\mathcal H^1$. One can associate to $E^\bullet$ the vector bundle stack $\mathfrak{E}$ defined by the quotient 
\[ \mathfrak{E} = [E^1/E^0]. \]
\end{definition}
\begin{definition}
The \textit{intrinsic normal cone} of $\fX$ is defined as follows: first embed $\fX$ into an ambient smooth stack $\cA$, and then take the quotient
\[ \mathfrak N = [N_{\fX/\cA} / T_{\cA}]. \]
\end{definition}
\noindent
As a consequence of the definition of a perfect obstruction theory, there is a closed embedding $j \colon \mathfrak{N} \hookrightarrow \mathfrak{E}$.  We define $\text{vdim}(\fX) \colonequals \text{rk } E^0 - \text{rk } E^{1}$. Note that $\dim \mathfrak{N} = 0$ and $\dim \mathfrak{E} = \dim \fX -\text{vdim}(\fX)$. 
\begin{definition}
The virtual fundamental class $[\fX]^{\vir} \in CH_{\text{vdim}(\fX)}(\fX)$ is defined by 
\[ [\fX]^{\vir} \colonequals {\bf 0}^![\mathfrak N], \]
where $\textbf{0} \colon \fX \to \mathfrak E$ is the zero section.
\end{definition}
\noindent
Given an lci morphism of DM stacks $u:\fX'\to \fX$, there is a Gysin pullback $CH^*(\fX)\to CH^*(\fX')$ so the virtual class $[\fX]^{\vir}\in CH^*(\fX)$ can be restricted to $\fX'$. One can ask whether there is a restriction procedure at the level of the perfect obstruction theory. This is provided by the compatibility datum  in \cite{BF97}. 
Consider a Cartesian diagram
\begin{center}
\begin{tikzcd}
\fX' \arrow[r, "u"] \arrow[d, swap, "\tau'"] & \fX \arrow[d, "\tau"] \\
\B' \arrow[r, swap, "v"] & \B
\end{tikzcd}
\end{center}
where $v:\B' \to \B$ is an lci morphism.

\begin{definition}{\cite[Definition 5.8]{BF97}}\label{definition:compatible data}
Let $E\to \L_{\fX}$ and $E'\to \L_{\fX'}$ be perfect obstruction theories for $\fX$ and $\fX'$, respectively. A \textit{compatibility datum} relative to $v$ is a morphism of exact triangles in $D^b(\fX')$:
\begin{center}
\begin{tikzcd}
u^{\ast}E \arrow[r] \arrow[d] & E' \arrow[r] \arrow[d] & (\tau')^{\ast}\L_{\B'/\B} \arrow[r] \arrow[d] & u^{\ast}E[1] \arrow[d] \\
u^{\ast}\L_{\fX} \arrow[r] & \L_{\fX'} \arrow[r] & \L_{\fX'/\fX} \arrow[r] & u^{\ast}\L_{\fX}[1]
\end{tikzcd}
\end{center}
\end{definition}

\begin{prop}{\cite[Proposition 5.10]{BF97}}\label{prop:comparison of virtual classes}
Let $E$ and $E'$ be compatible perfect obstruction theories over $v$, as above, with their associated virtual classes. If $\B$ and $\B'$ are smooth, then
\[ v^{!}[\fX]^{\mathrm{vir}}=[\fX']^{\mathrm{vir}}.\]
\end{prop}

\subsection{Deformation invariance.}\label{subsec:definvariance}

To relate the genus one virtual class associated to the general K3 surface $(X,L)$ to that of the hyperelliptic specialization $(X_0,L_0)$, we consider relative versions of the notions in \S \ref{subsec:VFC}. Let $\epsilon:\mathcal{X} \to \Delta$  be a (polarized) family over an affine curve $\Delta$ which realizes the specialization, and $\mathcal L$ the polarizing line bundle on $\mathcal X$.
\begin{definition}
Let $\nu:\overline{\mathcal M}_1(\epsilon, \mathcal L)\to \Delta$ denote the moduli stack of genus 1 stable maps to the fibers of $\epsilon$.
\end{definition}
Note that $\nu$ is proper, but not flat. For example, $\nu^{-1}(0)$ has components of larger dimension as described in Section \ref{sec:hyperstable}. Nonetheless, $\overline{\mathcal M}_1(\epsilon, \mathcal L)$ carries a perfect relative obstruction theory $E^\bullet_{\epsilon}$ in the sense of \cite{BF97}. The associated virtual class satisfies the following compatibility property:
\begin{prop}\label{compatible}
For all $t\in \Delta$, let $\iota_t:\{t\} \to \Delta$ be the inclusion. The refined Gysin map takes
\[ \iota_t^! [\overline{\mathcal M}_1(\epsilon, \mathcal L)]^{\vir} = [\overline{\mathcal M}_1(\mathcal X_t, \mathcal L_t)]^{\vir}. \]
\end{prop}
\begin{proof}
This is a special case of \cite[Theorem 20(i)]{maulikpandthomas}. There they construct the perfect relative obstruction theory $E^\bullet_{\epsilon}$ for K3 families $\epsilon$ with potentially singular fibers, but $\mathcal L$ is still a primitive class.
\end{proof}
The class $[\overline{\mathcal M}_1(\epsilon, \mathcal L)]^{\vir}\in CH_2(\overline{\mathcal M}_1(\epsilon, \mathcal L),\mathbb Q)$ is represented by a linear combination of integral 2-cycles. Since $\Delta$ is a smooth affine curve, an integral 2-cycle $Z$ is flat over $\Delta$ if and only if it dominates $\Delta$. On the other hand, $Z$ is non-flat if and only if it is supported on a fiber of $\nu$. In that case, $\iota_t^![Z]=0$ for all $t\in \Delta$, so subtracting $[Z]$ from $[\overline{\mathcal M}_1(\epsilon, \mathcal L)]^{\vir}$ does not change the compatibility property of Prop. \ref{compatible}. Hence, we may assume without loss of generality that the relative virtual class is represented by a linear combination of cycles flat over $\Delta$.

Since $\overline{\M}_1^\circ(X,L)$ is reduced of the expected dimension, the virtual class is equal to the fundamental class on the general fiber. There is a unique way to extend 
\[\bigcup_{t\neq 0} \overline{\M}_1^\circ(\mathcal X_t,\mathcal L_t)\]
to a cycle flat and proper over $\Delta$, namely by taking the Zariski closure. Let $\overline{\M}_1^{\lim}(X_0,L_0)$ denote the central fiber of the closure. Recall from \S \ref{subsec:FFcomponents} that there is a forgetful map $\tau \colon \overline{\F} \rightarrow \overline{\Omega}$. The ghost components (see Definition~\ref{def:ghost}) of $\overline{\M}_1(X,L)$ specialize to cycles disjoint from $\tau^{-1}(J)=\F(J)$ for generic $J \in \Omega$. This is because the elliptic components in the domain of a stable map of $\F(J)$ must map to the elliptic curve $J$ itself. As a result, we have the following relation
\begin{equation}\label{interaction with ghost components}
    [\overline{\M}_1(X_0,L_0)]^{\vir}|_{\F(J)} = [\overline{\M}_1^{\lim}(X_0,L_0) ] |_{\F(J)},
\end{equation}
which will be sufficient to produce the desired bound.

\subsection{Virtual class comparison.} Let $(X_0,L_0)$ be a general hyperelliptic K3 surface of genus $g$ as in Set-up~\ref{setup:hyperelliptic K3}, and let $\Omega$ be the space defined in \S \ref{subsec:FFcomponents}. Let $J$ be a general element in $\Omega$. We will use the formalism above to compare two classes in $CH_0(\F(J))$. First, the virtual class on $\overline{\M}_1(S_0,L_0)$ in dimension 1 restricts to a dimension 0 class on the proper divisor $\F(J)$. Second, there is a perfect obstruction theory on each Bryan-Leung space $\smash{\overline{\M}}\vphantom{M}^{\BL}_{s(a)}$ with virtual dimension 0. Proposition~\ref{isom moduli} gives an isomorphism of stacks
\begin{equation}\label{stackisom}
    \F_{s(a)}(J) \cong \smash{\overline{\M}}\vphantom{M}^{\BL}_{s(a)}
\end{equation}
so each factor $\F_{s(a)}(J)$ carries a perfect obstruction theory. After taking products and disjoint unions, the resulting virtual class on $\F(J)$ is denoted $[\F(J)]^{\vir}$.

\begin{lem}\label{lemma:obstruction}
For a general element $J \in \Omega$, we have:
\[ [\overline{\M}_{1}(X_{0}, L_{0})]^{\vir} \big|_{\F(J)} = \left[ \F(J) \right]^{\vir}.\]
\end{lem}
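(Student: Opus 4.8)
The plan is to compare the two perfect obstruction theories that produce the two sides of the equality, both restricted to the smooth proper divisor $\F(J) \subset \overline{\M}_1(X_0,L_0)$. The left-hand side is the restriction of the reduced virtual class of $\overline{\M}_1(X_0,L_0)$; since $\F(J)$ is cut out by the locus where the elliptic component maps to the fixed bisection $J$, and $\F(J)$ is a smooth Deligne-Mumford stack of the expected dimension, this restriction equals $c_{\mathrm{top}}$ of the obstruction bundle $E = \mathcal H^1(E^\bullet)|_{\F(J)}$ of the induced obstruction theory on $\F(J)$ (here one uses that for a regularly embedded smooth substack the restricted virtual class is the Euler class of the restricted obstruction bundle, together with the excess intersection/normal bundle bookkeeping comparing the reduced GW obstruction theory on $\overline{\M}_1(X_0,L_0)$ with the induced one on $\F(J)$). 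The right-hand side, $[\F(J)]^{\vir}$, is by construction the product over the factors $\F_{s(a_i)}(J) \cong \overline{\M}_{s(a_i)}$ of the Bryan–Leung virtual classes; since each $\overline{\M}_{s(a)}$ is smooth, the previous lemma gives $[\overline{\M}_{s(a)}]^{\vir} = c_{\mathrm{top}}$ of the Bryan–Leung obstruction bundle, and the class on $\F(J)$ is the product (equivalently, $c_{\mathrm{top}}$ of the direct sum of pullbacks of these bundles over the product $\prod_i \F_{s(a_i)}(J)$, summed over the decomposition~\eqref{1FF}).

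Thus the statement reduces to an identification of obstruction bundles: on each stratum $\prod_i \F_{s(a_i)}(J)$ of $\F(J)$, the restriction of the reduced GW obstruction bundle of $\overline{\M}_1(X_0,L_0)$ must agree — as a vector bundle, or at least have the same top Chern class — with the direct sum of the pullbacks of the Bryan–Leung obstruction bundles under the isomorphisms of Proposition~\ref{isom moduli}. The key steps I would carry out are: (1) write the GW obstruction theory on $\overline{\M}_1(X_0,L_0)$ at a point $[f] \in \F(J)$ in terms of $H^0, H^1$ of $f^*T_{X_0}$ (with the reduced modification killing the one-dimensional $H^2(\O_{X_0})$-direction) and the deformations of the domain; (2) use the normal crossings structure of the domain curve $E(a)$ — an elliptic component $E_*$ mapped isomorphically to $J$, glued to chains of rational components mapped into the nodal fibers $R_i$ — to run a normalization/Mayer–Vietoris sequence splitting $f^*T_{X_0}$ into the contribution of $E_*$ and the contributions of the chains; (3) observe that the $E_*$-contribution is \emph{unobstructed after the reduced modification} (this is exactly where genus $1$ with a fixed image bisection and the reducedness of the K3 obstruction theory conspire, mirroring the role of the section $S$ in \cite{BL00}), so all obstructions are concentrated on the rational chains; and (4) match the chain contributions near each $R_i$ with the Bryan–Leung obstruction bundles on $\overline{\M}_{s(a_i)}$, using that $R_i$ is a nodal fiber of the induced elliptic-type fibration on $X_0$ with the same local structure as a nodal fiber $N_i$ of the Bryan–Leung K3, and that the behavior near distinct $R_i$ is independent.

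The main obstacle I expect is step (3) together with the precise comparison in (2): controlling the reduced obstruction theory across the boundary of $\overline{\M}_1$, i.e., verifying that the elliptic component $E_*$ contributes no obstructions once one passes to the reduced theory, and that the smoothing-of-nodes deformations (joining $E_*$ to the chains, and internal nodes of the chains) do not introduce extra obstruction directions beyond those accounted for on the Bryan–Leung side. Concretely this is a local computation of $H^1$ of the pushforward of $f^*T_{X_0}$ along the normalization together with the connecting maps at the nodes; the subtlety is bookkeeping the reduced modification (which removes a trivial summand globally) consistently with the stratawise product structure~\eqref{1FF}. Once these local sheaf cohomology computations are in hand, the identification of top Chern classes — hence of the two zero-cycles in $CH_0(\F(J))$ — follows by the same argument as in \cite[\S 4--5]{BL00}, adapted from genus $0$ with a section to genus $1$ with a fixed elliptic bisection as in Remark~\ref{imageK3map}.
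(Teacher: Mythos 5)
Your proposal follows essentially the same route as the paper: both reduce the statement to an identification of the restricted reduced Gromov--Witten obstruction bundle with the product of Bryan--Leung obstruction bundles on the strata of $\F(J)$, using that $\F(J)$ (resp.\ $\overline{\F}$ near it) is smooth so that the restricted virtual class is $c_{\mathrm{top}}$ of the obstruction bundle, and both hinge on the observation that the elliptic component's $H^1(C_\ast, f^\ast T_{X_0})$ is exactly cancelled by the reduced modification $H^2(X_0,\O_{X_0})$, leaving only the rational-chain contributions, which match the Bryan--Leung side. The one ingredient the paper makes explicit that you leave implicit is the non-isotriviality of the family $\Omega$, which is what makes the connecting map surject onto $\Def(C_\ast,p)$ and keeps domain deformations of the elliptic component out of the obstruction space.
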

\begin{proof}
To prove the comparison of virtual classes, we work at the level of obstruction theories, using a compatibility datum (see Proposition~\ref{prop:comparison of virtual classes}).

Fix a general element $J\in \Omega$ and recall that there is a morphism $\tau \colon \overline{\F} \rightarrow \overline{\Omega}$. Then in our context, $\B$ is an \'{e}tale neighborhood of $J$, $\B' = \{J\}$, $\fX \colonequals \tau^{-1}(\B) \subset \F$, and $\fX'=\F(J)$. These maps fit into the diagram below:
\begin{center}
\begin{tikzcd}
\F(J) \arrow[r, "u"] \arrow[d, swap, "\tau'"] & \fX \arrow[d, "\tau"] \\
\{J\} \arrow[r, swap, "v"] & \B
\end{tikzcd}
\end{center}
By choosing $J\in \Omega$ general and $\B\to \Omega$ to be an \'{e}tale neighborhood of $J$, we may assume that $\B$ is smooth, $\tau:\fX \to \B$ is flat, and $J$ is transverse to all of the nodal rational fibers in $X_{0}$\footnote{It suffices to produce a single example to check this open condition. Working on $\P^{1} \times \P^{1}$, we need an example of a $(4,4)$ curve $B$ and an irreducible $(1,1)$ curve $\Gamma$ such that $\Gamma$ is tangent to $B$ at exactly two points, which are disjoint from the 24 fibers of type $(0,1)$ coming from the simple ramification points of the second projection $B \rightarrow \P^{1}$. For instance, one can choose $\Gamma$ to be the graph of $y = 1/x$ and $B$ to be the union of two general cubics which are of type $(3,1)$ and $(1,3)$ such that each of them is tangent to $\Gamma$ at one point.}. Note that
\[\L_{\F(J)/\fX}\cong (\tau')^{\ast}\L_{\{J\}/\B} \cong T^*_{\{J\}}\B \otimes \O_{\F(J)}[1]\] 
is a shifted trivial bundle.

Let $E$ denote the obstruction theory on $\fX$ coming from $\overline{\M}_1(X_0,L_0)$, and let $E^{\BL}_{s(a)}$ denote the Bryan-Leung obstruction theory on $\mathcal F_{s(a)}(J)$ inherited through isomorphism \eqref{stackisom}. We will define a morphism
\begin{equation}\label{eqn:themorphism}
    T^*_{\{J\}}\B \otimes \O_{\F(J)} \rightarrow u^{\ast}E,
\end{equation}
and set
\begin{equation}\label{cone definition}
    E'\colonequals \mathrm{cone}\left( T^*_{\{J\}}\B \otimes \O_{\F(J)} \to u^{\ast}E\right),
\end{equation}  
so that the compatibility datum will follow from axiom TR3 of a triangulated category \cite[\href{https://stacks.math.columbia.edu/tag/0145}{Tag 0145}]{stacks-project}:
\begin{center}
\begin{tikzcd}
T^*_{\{J\}}\B \otimes \O_{\F(J)} \arrow[r] \arrow[d, "\sim"{sloped, above}] & u^{\ast}E\arrow[r] \arrow[d] & E' \arrow[r] \arrow[d, dashed] & T^*_{\{J\}}\B \otimes \O_{\F(J)}[1] \arrow[d, "\sim"{sloped, above}] \\
\L_{\F(J)/\fX}[-1] \arrow[r] & u^{\ast}\L_{\fX} \arrow[r] & \L_{\F(J)} \arrow[r] & \L_{\F(J)/\fX}.
\end{tikzcd}
\end{center}
In order to define (\ref{eqn:themorphism}), it is enough to define it for each admissible sequence $s(a)$. This is because the decomposition \eqref{1FF} of $\F(J)$ extends over the \'{e}tale neighborhood $\B$
\[\fX \cong \coprod_{a_1+\dots+a_{48}=r-1} \left( \prod_{i=1}^{48} \coprod_{s(a_i)} \F_{s(a)}(\B) \right), \]
where $\F_{s(a)}(J')$ is the fiber of $\F_{s(a)}(\B)$ over each $J'\in\B$ (compare with \cite[line (7) of pg. 395]{BL00}).

Recall from Proposition~\ref{isom moduli} that the universal family $\cC_1$ over the Bryan-Leung space $\smash{\overline{\M}}\vphantom{M}^{\BL}_{s(a)}$ has two components $\cC_{1} = \cS \cup \cD_1$, where $\cS$ is the universal smooth rational component and $\cD_1$ is the universal rational tree component. Likewise, the universal family for $\F_{s(a)}(\B)$ breaks up into a union of two components: $\cC_{2} = \cE \cup \cD_2$, where $\cE$ is the universal elliptic component and $\cD_2$ is the universal rational tree component.

The edge cutting axiom (Axiom III of \cite{Behrend97GW}) for Gromov-Witten obstruction theories gives an exact triangle relating the Gromov-Witten obstruction theory for the separate components, the obstruction theory for their union, and the cotangent bundle of the target variety pulled back through the evaluation map at the marked point. Consider the following two Cartesian square diagrams, the first for the Bryan-Leung moduli space and the second our moduli space:
\begin{center}
\begin{tikzcd}
\smash{\overline{\M}}\vphantom{M}^{\BL}_{s(a)} \arrow[r, "{(\tau_{\cS},\tau_{\cD_1})}"] \arrow[d, swap, "y"] & {\cS\times \cD_1} \arrow[r] \arrow[d] & \mathrm{pt}; \\
Y\arrow[r, swap, "\delta"] & Y\times Y &
\end{tikzcd}
\end{center}

\begin{center}
\begin{tikzcd}
\F_{s(a)}(J) \arrow[r,"u"]  & \F_{s(a)}(\B) \arrow[r, swap, "{(\tau_{\cE}, \tau_{\cD_{2}})}"] \arrow[rr, bend left, "\tau"] \arrow[d, swap, "x_{0}"] & {\cE \times \cD_{2}} \arrow[r] \arrow[d] & \B ,\\
{} & X_0 \arrow[r, swap, "\delta"] & X_0\times X_0 & 
\end{tikzcd}
\end{center}
where in the first diagram $\tau_{\cS}$ and $\tau_{\cD_1}$ are the maps that remember the attaching point of the smooth rational component with the rational tree component, and in the second diagram $\tau_{\cE}$ and $\tau_{\cD_2}$ are the maps that remember the attaching point of the smooth elliptic component with the rational tree component. In a slight abuse of notation, we will use $u$ to refer to both the map $\F(J) \rightarrow \fX$ and the maps $\F_{s(a)}(J) \rightarrow \F_{s(a)}(\B)$.

From \cite[pg. 608]{Behrend97GW}, there are exact triangles
\begin{equation}\label{obstruction theory for EBL}
    y^*\L_{Y} \to \tau_{\cS}^{\ast} E_{\cS} \oplus \tau_{\cD_1}^{\ast} E_{\cD_1} \to E^{\BL}_{s(a)};
\end{equation}
\begin{equation}\label{obstruction theory for FJ}
    x_0^*\L_{X_0} \to \tau_{\cE}^{\ast}E_{\cE} \oplus \tau_{\cD_2}^{\ast} E_{\cD_2} \to E_{s(a)}. 
\end{equation}
We may apply $u^{\ast}$ to the latter triangle to obtain:
\begin{equation}\label{pullbacks of equations from EL}
    u^{\ast}x_0^{\ast} \L_{X_0} \to u^{\ast} \tau_{\cE}^{\ast} E_{\cE} \oplus u^{\ast}\tau_{\cD_{2}}^{\ast} E_{\cD_2} \to u^{\ast} E_{s(a)}.
\end{equation}


\noindent \textbf{Claim:} There is an exact triangle
\begin{equation}\label{Tate curve exact triangle}
    u^{\ast}\tau^{\ast} \L_\B \to u^\ast \tau_{\cE}^\ast E_{\cE} \oplus u^{\ast} \tau_{\cD_2}^\ast E_{\cD_2} \to \tau_{\cS}^{\ast} E_{\cS} \oplus \tau_{\cD_1}^{\ast} E_{\cD_1}
\end{equation}
where the third object is pulled back through the isomorphism of Proposition \ref{isom moduli}.

\begin{proof}[Proof of Claim.]\renewcommand{\qedsymbol}{}
The last two objects in the purported exact triangle are defined in terms of Kontsevich moduli spaces with different targets: $X_0$ and $Y$, respectively. Both are K3 surfaces with genus one fibrations, but $X_0$ is hyperelliptic with a bisection $J$ and $Y$ is unigonal with section $S$. Both fibrations have 24 nodal fibers, and furthermore a formal neighborhood of any nodal fiber (on either surface) is isomorphic to the Tate curve over $\mathbb{C}\llbracket t \rrbracket$, viewed as families of 1-marked curves of genus 1. The markings are given by sections of the Tate curve: a neighborhood of $y\in S$, and a neighborhood of $x_0\in J$. Fix an isomorphism $\Phi$ over $\mathbb{C} \llbracket t \rrbracket$ between the formal neighborhoods $\mathcal U_{N_1}\subset Y$ and $\mathcal U_{N_2}\subset X_0$ respecting the markings (any two sections of the Tate curve differ by translation).

The obstruction theories $\tau_{\cD_1}^\ast E_{\cD_1}$ and $u^{\ast} \tau_{\cD_2}^\ast E_{\cD_2}$ are canonically isomorphic; they are the Gromov-Witten obstruction theories for genus 0, 1-marked maps to a neighborhood of the nodal fiber. Hence, it suffices to show the exactness of 
\[u^{\ast}\tau^{\ast} \L_\B \to u^\ast \tau_{\cE}^\ast E_{\cE} \to \tau_{\cS}^{\ast} E_{\cS}. \]
Note that each term here can be represented by a perfect complex supported in degree 0, so it suffices to show that there is a short exact sequence of vector bundles:
\[ 0 \to T^\ast_{\{J\}}\B \otimes \O_{\F(J)} \to \Def(J,x_0)^\ast \otimes \O_{\F(J)} \to \Def(S,y_0)^\ast \otimes \O_{\F(J)} \to 0, \]
which can be rewritten (dually) as
\[ 0 \to \Def(S,y_0) \otimes \O_{\F(J)} \to \Def(J,x_0)\otimes \O_{\F(J)} \to T_{\{J\}}\B \otimes \O_{\F(J)} \to 0 .\]
The last map is given by the differential of the universal family $\cE \to \B$. The kernel of this map is the tangent space to $J$ at $x_0$, which is identified with the tangent space to $S$ at $y$ via the (restricted) isomorphism $\Phi$:
\begin{center}
\begin{tikzcd}
T_y Y \arrow[r, "d\Phi"]  & T_{x_0} X_0   \\
T_y S \arrow[r, swap, "d\Phi|_{T_{y}S}"] \arrow[u] & T_{x_0} J.\arrow[u]
\end{tikzcd}
\end{center}
This completes the proof of the claim.
\end{proof}


Consider the first morphism from (\ref{Tate curve exact triangle}):
\[ T^\ast_{\{J\}}\B \otimes \O_{\F(J)}\overset{p}\longrightarrow u^{\ast}\tau_{\cE}^{\ast}E_{\cE} \oplus u^{\ast}\tau_{\cD_2}^{\ast}E_{\cD_2}, \]
and the second morphism from (\ref{pullbacks of equations from EL}):
\[u^{\ast}\tau_{\cE}^{\ast}E_{\cE} \oplus u^{\ast}\tau_{\cD_2}^{\ast}E_{\cD_2} \overset{q}\longrightarrow u^\ast E_{s(a)}.\]
We now define the desired morphism (\ref{eqn:themorphism}) for the compatibility datum by taking an external direct sum over $s(a)$ for the product decomposition (\ref{1FF}) of the composed morphisms:
\[ q\circ p\colon  T^*_{\{J\}}\B \otimes \O_{\F(J)} \overset{p}\longrightarrow u^{\ast}\tau_{\cE}^{\ast}E_{\cE} \oplus u^{\ast}\tau_{\cD_2}^{\ast}E_{\cD_2} \overset{q}\longrightarrow u^{\ast}E_{s(a)}.\]
Using the octahedral axiom TR4 of a triangulated category \cite[\href{https://stacks.math.columbia.edu/tag/0145}{Tag 0145}]{stacks-project} and the exact triangles above, we have the following diagram:

\begin{center}
\begin{tikzpicture}[scale=3]
  \def\openingangle{45}
  \node (Z') at (0, 0) {$\tau_{\cS}^{\ast}E_{\cS} \oplus \tau_{\cD_1}^{\ast} E_{\cD_1}$};
  \node (Y) at ($(Z') +(270-\openingangle:1)$) {$u^{\ast}\tau_{\cE}^{\ast}E_{\cE} \oplus u^{\ast}\tau_{\cD_2}^{\ast}E_{\cD_2}$};
  \node (Y') at ($(Z') +(270+\openingangle:1)$) {$E'_{s(a)}$};
  \node (X) at ($(Y) +(270-\openingangle:1)$) {$T^*_{\{J\}}\B \otimes \O_{\F_{s(a)}(J)}$};
  \node (X') at ($(Y') +(270+\openingangle:1)$) {$W[1]$};
  \draw[white, name path=L1] (X) -- (Y');
  \draw[white, name path=L2] (X') -- (Y);
  \path[name intersections={of=L1 and L2, by=pZ}];
  \node (Z) at (pZ) {$u^{\ast}E_{s(a)}$};

  \draw[-latex] (X) -- node[above left]{$p$} (Y);
  \draw[-latex] (X) -- node[below right]{$q \circ p$} (Z);
  \draw[-latex] (Y) -- node[above left]{\eqref{Tate curve exact triangle}} (Z');
  \draw[-latex] (Y) -- node[below left]{$q$} (Z);
  \draw[-latex] (Z) -- (Y');
  \draw[-latex] (Z) -- node[below left]{\eqref{pullbacks of equations from EL}} (X');
  \draw[-latex, dashed] (Y') -- (X');
  \draw[-latex, dashed] (Z') -- (Y');
\end{tikzpicture}
\end{center}
Here, $W$ is given by
\[ W\colonequals u^{\ast}x_{0}^{\ast}\L_{X_{0}} \cong T^*_{x_0}X_0 \overset{d\Phi^\ast} \longrightarrow T^\ast_y Y \cong y^\ast \L_Y,\]
and $E'_{s(a)} \colonequals \mathrm{cone} \left( T^{\ast}_{J}\B \otimes \O_{\F_{s(a)}(J)} \rightarrow u^{\ast}E_{s(a)} \right)$. Note that every linear triple is an exact triangle, and the exactness of the dotted triangle is the consequence of axiom TR4. In particular, we see that
\begin{equation}\label{identification of obstruction theory}
    E'_{s(a)} \cong \mathrm{cone}(W\to \tau^{\ast}_{\cS}E_{\mathcal{S}}\oplus \tau^{\ast}_{\cD_1}E_{\cD_1}) \cong E^{\BL}_{s(a)},
\end{equation} 
where the last isomorphism comes from \eqref{obstruction theory for EBL}.
Denote $[\F(J)]^{\mathrm{vir}}_{E'}$ to be the virtual class on $\F(J)$ induced by the perfect obstruction theory $E'$, the external direct sum of the obstruction theories $E'_{s(a)}$. By Proposition \ref{prop:comparison of virtual classes} and the isomorphism (\ref{identification of obstruction theory}), we conclude that
\[ [\overline{\M}_{1}(X_{0}, L_{0})]^{\vir} \big|_{\F(J)} = \left[ \F(J) \right]^{\vir}_{E'}=\left[ \F(J) \right]^{\vir}. \qedhere \]
\end{proof}

\section{Proof of Theorem~\ref{thm:Main}.}\label{sec:proofmaintheorem}

By Corollary~\ref{severikontsevich}, for a very general K3 surface $(X,L)$ we know that the compactified Severi curve $\overline{V}^{L, g-1}$ is birational to $\overline{\M}_1^\circ(X,L)$ component by component. As mentioned above, $\overline{\M}_1^\circ(X,L)$ is reduced of the expected dimension, so its virtual class is equal to the fundamental class. Let us specialize from $(X, L)$ to a very general hyperelliptic K3 surface $(X_{0}, L_{0})$ as in Set-up~\ref{setup:hyperelliptic K3}, with $g = 2r + 1$. We have a family of Kontsevich moduli spaces, each equipped with a virtual class. By definition, $\overline{\M}_1^\circ(X,L)$ has flat limit
\[ \overline{\M}^{\lim}_1(X_0,L_0) \subset  \overline{\M}_1(X_0,L_0). \]
Proposition~\ref{FFcounting} below computes the intersection number
\[ [\overline{\M}^{\lim}_1(X_0,L_0) ]\cdot \F(J) > 0. \]
This means that some components of $\overline{\M}^{\lim}_1(X_0,L_0)$ are contained in $\overline{\F}$, and they dominate $\overline{\Omega}$. In fact, \cite[Lemma 5.8]{BL00} implies that \textit{all} components of $\overline{\M}^{\lim}_1(X_0,L_0)$ over $\overline{\Omega}$ are reduced because the contribution of each connected component of $\F(J)$ to the virtual count is either 1 or 0; see Proposition~\ref{FFcounting} below. After restricting to a component if necessary, we may assume that $\overline{\Omega}$ is irreducible. Let $\bigcup_{i} V_i$ be the union of all components of $\overline{\M}^{\lim}_1(X_0,L_0)$ which dominate $\overline{\Omega}$. The bound for the total geometric genus of $\overline{V}^{L,g-1}$ follows from:
\begin{equation}\label{eq:TotalGenusInequality}\def\arraystretch{1.5}
\begin{array}{rll}
\genus \left(\overline{V}^{L, g-1}\right) &= \genus \left( \overline{\M}^\circ_{1}(X, L) \right) & \\
&\geq \genus \left( \bigcup_{i} V_i \right) & [\text{by Lemma~\ref{lemma:semicontinuityGeomGenus}}] \\
&> \sum_i \left( \genus (V_i)-1 \right) & \\
&\geq \left( [\overline{\M}^{\lim}_1(X_0,L_0) ]\cdot \F(J) \right) \left(\genus (\overline{\Omega}) -1\right), & [\text{by the Riemann-Hurwitz formula}]
\end{array}
\end{equation}
for some constant $C >0$. Here, genus means total geometric genus.

It now suffices to show that $\genus(\overline{\Omega}) > 1$ and give an asymptotic for the intersection number.

\begin{lem}\label{lemma:G9}
For a very general hyperelliptic $(X_0,L_0)$, the geometric genus of each irreducible component of $\overline{\Omega}$ is at least 9. 
\end{lem}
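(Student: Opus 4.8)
The plan is to produce, for every irreducible component $\overline{\Omega}_0 \subseteq \overline{\Omega}$, a nonconstant morphism from its normalization $\overline{\Omega}_0^{\nu}$ to the Jacobian $\Jac(B)$ of the branch curve, and then to use that, for very general $(X_0,L_0)$, the abelian variety $\Jac(B)$ is simple.

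First I would build a morphism $\theta \colon \overline{\Omega}_0^{\nu} \to \Sym^2 B$. By the birational description in \S\ref{subsec:FFcomponents}, a general point of $\overline{\Omega}$ is a reduced $(1,1)$-curve $C \subset \P^1 \times \P^1$ which is simply tangent to $B$ at two distinct points $p, q$ and transverse to $B$ at its remaining four intersection points; sending $C$ to $p + q \in \Sym^2 B$ defines a rational map $\overline{\Omega} \dashrightarrow \Sym^2 B$, which extends to a morphism $\theta$ on the normalization of each component because the source is a curve. This $\theta$ is nonconstant on every component: the $(1,1)$-curves tangent to $B$ at a fixed pair $\{p_0, q_0\}$ form a finite set, being the intersection in $\abs{\O(1,1)} \cong \P^3$ of the two lines $\P^1_{p_0}$, $\P^1_{q_0}$ of $(1,1)$-curves having contact order $\geq 2$ at $p_0$, respectively at $q_0$, and these lines are distinct since their base loci are the disjoint length-two subschemes $2p_0$, $2q_0$. (Equivalently: the two maps to $B$ remembering a single tangency point, defined on the incidence variety $\{(C,p,q) : C \text{ tangent to } B \text{ at } p \text{ and at } q\}$, are finite, of degree $28$ --- the six residual intersection points with $B$ realize a curve birational to $B$ as a $6$-sheeted cover of the line $\P^1_{p_0}$, and Riemann--Hurwitz reads $2 \cdot 9 - 2 = 6 \cdot (-2) + 28$.)

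Next I would pass to the Jacobian. A very general hyperelliptic $(X_0, L_0)$ corresponds to a very general $(4,4)$-curve $B$ (the two $18$-dimensional moduli spaces being birational), which in particular is non-hyperelliptic, so the Abel--Jacobi map $u \colon \Sym^2 B \hookrightarrow \Jac(B)$, $D \mapsto \O_B(D - D_0)$, is a closed immersion and $\varphi \colonequals u \circ \theta \colon \overline{\Omega}_0^{\nu} \to \Jac(B)$ is nonconstant. For very general $B$ the abelian variety $\Jac(B)$ is simple; granting this, the homomorphism $h \colon \Jac(\overline{\Omega}_0^{\nu}) = \Alb(\overline{\Omega}_0^{\nu}) \to \Jac(B)$ through which $\varphi$ factors once a base point is chosen has nonzero image (since $\varphi$ is nonconstant), and this image is an abelian subvariety of $\Jac(B)$, hence all of it; so $h$ is surjective and
\[ \genus(\overline{\Omega}_0) = \dim \Jac(\overline{\Omega}_0^{\nu}) \geq \dim \Jac(B) = \genus(B) = (4-1)(4-1) = 9. \]

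The step I expect to be the main obstacle is the simplicity of $\Jac(B)$ for a very general $(4,4)$-curve $B$; note that even the weaker bound $\genus(\overline{\Omega}) \geq 2$ used in the proof of the main theorem already requires that $\Jac(B)$ contain no elliptic curve. Since the non-simple principally polarized abelian varieties of dimension $9$ form a countable union of proper closed subvarieties of $\mathcal{A}_9$, it would suffice either to exhibit a single smooth $(4,4)$-curve with simple Jacobian, or---more robustly---to prove that the monodromy of the universal family of smooth $(4,4)$-curves acts on $H^1$ with Zariski-dense image in $\mathrm{Sp}_{18}$, which forces the rational Hodge structure, and hence the Jacobian, of the very general member to be simple. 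I would obtain the monodromy statement by a Picard--Lefschetz analysis along a degeneration of $B$ inside $\abs{\O(4,4)}$ (for instance to a transverse union of curves of smaller bidegree, followed by an iterated vanishing-cycle argument), or by invoking known big-monodromy results for smooth curves on rational surfaces. With simplicity in hand, the remaining steps above are routine.
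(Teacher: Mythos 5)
Your proposal is correct and follows essentially the same route as the paper: identify $\overline{\Omega}$ birationally with the tangency locus in $\Sym^2(B)$, embed it into $\Jac(B)$ via the Abel--Jacobi map (using that the $(4,4)$-curve $B$ is non-hyperelliptic), and conclude from the simplicity of $\Jac(B)$ for very general $B$. The one step you flag as the main obstacle --- simplicity of $\Jac(B)$ --- is handled in the paper by citing Ciliberto--van der Geer \cite[Corollary 1.2]{CG92}, which applies since $H^{1,0}(\P^1\times\P^1)=0$ and $B$ is a very general member of its linear system, so no monodromy computation is needed.
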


\begin{proof}
Let $B$ be a very general smooth $(4, 4)$ curve on $\P^{1} \times \P^{1}$. The curve
\[ Z_{2} \colonequals \{ A \in \Sym^{2}(B) : \exists D \in \abs{\O(1, 1)} \text{ such that } D \text{ is tangent to $B$ at } A \} \]
is birational to $\overline{\Omega}$, so it suffices to consider $Z_2$.

We observe that any smooth curve $B \subset \P^{1} \times \P^{1}$ of bi-degree $(a, b)$ (where $a \leq b$) satisfies $\gon(B) = a$. To see this, note that $\gon(B) \leq a$ from the projection(s) to $\P^{1}$. Given a smooth projective curve $C$, recall from \cite[Lemma 1.3]{BDELU17} that if $K_C$ is $p$-very ample, then $\gon(C)\geq p+2$. Since $K_B \cong \O(a-2,b-2)|_B$ is $(a-2)$-very ample, it follows that $\gon(B)=a$. In particular, our $(4,4)$ curve $B$ has gonality 4 and is not hyperelliptic. 

Since the genus of $B$ is 9 and $B$ is not hyperelliptic, the Abel-Jacobi map $\Sym^{2}(B) \rightarrow \Jac(B)$ is an embedding and therefore $Z_{2}$ embeds into $\Jac(B)$. It follows from \cite[Corollary 1.2]{CG92} that $\Jac(B)$ is simple because $H^{1, 0}(\P^{1} \times \P^{1}) = 0$ and $B$ is a very general element of a linear series. The Jacobian of the normalization of each component of $Z_{2}$ has a nonconstant map to $\Jac(B)$, so the geometric genus of each component is at least 9.
\end{proof}

\begin{prop}\label{FFcounting}
For a general element $J \in \Omega$, we have
\[ [ \overline{\M}^{\lim}_1(X_0,L_0) ]\cdot \F(J) = \left[ \prod_{m=1}^{\infty} (1-q^{m})^{-48} \right]_{q^{r-1}}= O(e^{C \sqrt{r}}) \]
for some constant $C > 0$.
\end{prop}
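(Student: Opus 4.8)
The plan is to compute $[\overline{\M}^{\lim}_1(X_0,L_0)]\cdot \F(J)$ as the degree of the covering $\bigcup V_i \to \overline{\Omega}$ over a general point $J \in \Omega$, and to show this equals the stated generating function coefficient. By Lemma~\ref{lemma:obstruction}, this intersection number equals the total virtual count $\deg [\F(J)]^{\vir}$, since the ghost components specialize to cycles disjoint from $\F(J)$. Using the decomposition \eqref{1FF}, I would write
\[ \deg [\F(J)]^{\vir} = \sum_{a_1+\dots+a_{48}=r-1}\ \prod_{i=1}^{48} \deg [\F_{a_i}(J)]^{\vir}, \]
so the problem reduces to evaluating $\deg [\F_a(J)]^{\vir}$ for each fixed nodal fiber $R$ and each $a \geq 0$.

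The key step is to invoke Proposition~\ref{isom moduli} together with Lemma~\ref{lemma:obstruction}: the isomorphism $\F_{s(a)}(J)\cong \overline{\M}_{s(a)}$ identifies not only the moduli stacks but the obstruction bundles (the $E_{\ast}$-versus-$\Sigma_{\ast}$ distinction contributes trivially to the obstruction by the cohomology computation in Lemma~\ref{lemma:obstruction}). Hence $\deg [\F_a(J)]^{\vir} = \deg [\overline{\M}_a]^{\vir}$, the Bryan-Leung local contribution of a single nodal fiber. From \cite{BL00} this local contribution has generating function
\[ \sum_{a=0}^\infty \left(\deg [\overline{\M}_a]^{\vir}\right) q^a = \prod_{m=1}^\infty (1-q^m)^{-1}, \]
which is exactly the Yau-Zaslow local factor attached to each of the 24 nodal fibers of an elliptic K3. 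Multiplying the $48$ independent factors (two per nodal rational curve $R_i$, reflecting that $J$ meets each $R_i$ in two points) and extracting the coefficient of $q^{r-1}$ gives
\[ \deg [\F(J)]^{\vir} = \left[\prod_{m=1}^\infty (1-q^m)^{-48}\right]_{q^{r-1}}, \]
as claimed.

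For the asymptotic, I would apply the Hardy-Ramanujan circle method, or more elementarily the classical estimate for the partition-type function $p_{48}(n)$ counting partitions into parts of $48$ colors: $\left[\prod (1-q^m)^{-k}\right]_{q^n} = \Theta\!\left(n^{-(k+2)/4}\, e^{2\pi\sqrt{kn/6}}\right)$. Specializing $k=48$, $n = r-1$ gives a lower bound of the form $c\,(r-1)^{-12}\,e^{4\pi\sqrt{2r/ \cdot \, }}$, which is certainly $\geq O(e^{C\sqrt{r}})$ for a suitable $C>0$; since $g = 2r+1$ this is also $O(e^{C'\sqrt{g}})$. A cruder self-contained argument suffices for the stated bound: the coefficient is at least the number of ways to write $r-1 = n_1+\dots+n_{48}$ with each $n_i\geq 0$, restricted to partitions, and dominating this below by a single partition-function value $p(\lfloor (r-1)/48\rfloor)$ already yields exponential growth in $\sqrt{r}$ via the Hardy-Ramanujan bound $p(n)\geq e^{C\sqrt{n}}$.

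The main obstacle is the identification $\deg [\F_a(J)]^{\vir} = \deg [\overline{\M}_a]^{\vir}$ at the level of virtual classes, not merely coarse moduli spaces: one must check that the obstruction bundle on $\F_a(J)$ — a priori carrying contributions from the genus-one component $E_{\ast}$ mapping to the elliptic bisection $J$ and from the normal-bundle deformations of $J$ inside $X_0$ in the reduced theory — reduces to the purely rational Bryan-Leung obstruction bundle $E_{BL}$. This is precisely the content of the cohomology long exact sequence in the proof of Lemma~\ref{lemma:obstruction}, where the $H^2(X_0,\O_{X_0})$ summand of the reduced theory pairs off isomorphically with the $C_\ast$-contribution $H^1(C_\ast, N_f)$, leaving only $H^1(\Psi, f^*T_{X_0}) \cong \mathcal{H}^1(E^\bullet)|_{[f]}$. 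Once that identification is in place, the computation is a direct application of \cite[\S5]{BL00} fiber-by-fiber, followed by the elementary partition asymptotics.
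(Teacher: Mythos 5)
Your proposal is correct and follows essentially the same route as the paper: reduce to $\deg[\F(J)]^{\vir}$ via Lemma~\ref{lemma:obstruction}, decompose over the $48$ local factors, identify each local contribution with the Bryan--Leung count via Proposition~\ref{isom moduli}, and conclude with Hardy--Ramanujan. The only cosmetic difference is that you cite the aggregated local generating function $\prod_m(1-q^m)^{-1}$ directly, whereas the paper unwinds it through the $\F_{s(a)}(J)$ decomposition and the count of $1$-admissible sequences (\cite[Lemmas 5.8, 5.9]{BL00}), which is the same content.
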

\begin{proof}
By equation~\eqref{interaction with ghost components} and Lemma~\ref{lemma:obstruction}, we have the following equalities
\[ [\overline{\M}^{\lim}_1(X_0,L_0)]\cdot \F(J) = [ \overline{\M}_1(X_0,L_0) ]^\vir\cdot \F(J)=[\F(J)]^{\vir}. \]
Recall that the virtual class has a decomposition which is compatible with the decomposition of $\F(J)$:
\[ \F(J) =\coprod_{a_1+\dots+a_{48}=r-1} \left( \prod_{i=1}^{48} \coprod_{s(a_i)} \F_{s(a_i)}(J) \right). \]
Each $\F_{s(a_i)}(J)$ carries an obstruction theory via the isomorphism in Proposition~\ref{isom moduli}. It suffices to calculate the degree of $[\F_{s(a)}(J)]^{\vir}$, which is equal to the degree of $[\smash{\overline{\M}}\vphantom{M}^{\BL}_{s(a)}]^{\vir}$.

We call an admissible sequence $\{ s_{n} \}$ \textit{1-admissible} if $s_{\pm n \pm 1}$ is either $s_{\pm n}$ or $s_{\pm n} - 1$ for all non-negative $n$.
By \cite[Lemma 5.8]{BL00}, $[\smash{\overline{\M}}\vphantom{M}^{\BL}_{s(a)}]^{\vir} = 1$ if $s(a)$ is a 1-admissible sequence and $[\smash{\overline{\M}}\vphantom{M}^{\BL}_{s(a)}]^{\vir} = 0$ otherwise.
By \cite[Lemma 5.9]{BL00}, the number of 1-admissible sequences $s$ with $\abs{s} = a$ is equal to the number of partitions of $a$, by taking the lengths of diagonals in the Young tableaux.
Therefore
\begin{align*}
[\overline{\M}^{\lim}_1(X_0,L_0)]\cdot \F(J) &= \sum_{a_1+\cdots+a_{48}=r-1} \left(\prod_{i=1}^{48} p(a_{i}) \right)\\
&= \left[ \prod_{m=1}^{\infty} (1-q^{m})^{-48} \right]_{q^{r-1}} \\
&= O(e^{C \sqrt{r}})
\end{align*}
for some constant $C > 0$. The last equality follows from the Hardy-Ramanujan asymptotic for the partition function \cite{HR1917}.
\end{proof}

Finally, we may combine equation~\eqref{eq:TotalGenusInequality} with Lemma~\ref{lemma:G9} and Proposition~\ref{FFcounting} to complete the proof of Theorem~\ref{thm:Main}.

%
%

\section{Open problems}

In this section, we collect some natural remaining questions. For $\P^{2}$, irreducibility of the Severi variety of degree $d$ and geometric genus $g$ curves was proved by Harris \cite{Harris86}. As mentioned in the introduction, the Severi variety $\overline{V}^{L, \delta}$ of a very general primitively polarized K3 surface $(X, L)$ with $c_{1}(L)^{2} = 2g-2$ is irreducible when $\delta \leq g-4$ (see \cite{BLC21}). On the other extreme, when $\delta = g-1$, one can ask:

\begin{problem}\label{problem}
Is the compactified Severi curve $\overline{V}^{L, g-1}$ irreducible?
\end{problem}
In a similar vein, it may be easier to give an upper bound for the maximum number of irreducible components of $\overline{V}^{L, g-1}$. Comparing this with the bound in Theorem~\ref{thm:Main} would give an approach to the less ambitious:

\begin{problem}
As $g\to\infty$, does $\overline{V}^{L, g-1}$ have an irreducible component of arbitrarily high geometric genus?
\end{problem}

Lemma~\ref{lemma:G9} implies that some irreducible components of $\overline{V}^{L, g-1}$ have geometric genus at least 9 (adapting the arguments in \S\ref{sec:proofmaintheorem}). Ultimately, one hopes for an explicit formula in the spirit of the Yau-Zaslow formula \cite{YZ96}.
\begin{problem}
Find an explicit formula for the total geometric genus of $\overline{V}^{L, g-1}$ on a very general K3 surface.
\end{problem}

More speculatively, we expect a similar result in higher dimensions:
\begin{problem}
For $\delta$ close to $g$, does the Severi variety $\overline{V}^{L,\delta}$ exhibit hyperbolic behavior in the sense that it does not contain rational or elliptic curves?
\end{problem}

\newpage

%
%

\bibliographystyle{siam}
\bibliography{Biblio}

\footnotesize{
\textsc{Department of Mathematics, Stony Brook University, Stony Brook, New York 11794} \\
\indent \textit{E-mail address:} \href{mailto:nathan.chen@stonybrook.edu}{nathan.chen@stonybrook.edu}

\vspace{\baselineskip}

\textsc{Simons Center for Geometry and Physics and Department of Mathematics, Stony Brook University, Stony Brook, New York 11794} \\
\indent \textit{E-mail address:} \href{mailto:fgreer@scgp.stonybrook.edu}{fgreer@scgp.stonybrook.edu}

\vspace{\baselineskip}

\textsc{Department of Mathematics, Stony Brook University, Stony Brook, New York 11794} \\
\indent \textit{E-mail address:} \href{mailto:ruijie.yang@stonybrook.edu}{ruijie.yang@stonybrook.edu}

}

\end{document}